\newcommand{\C}{{\mathbb{C}}}          
\newcommand{\Proj}{{\mathbb{P}}}        
\newcommand{\R}{{\mathbb{R}}}          
\newcommand{\Z}{{\mathbb{Z}}}          
\newcommand{\g}{{\mathfrak{g}}}       %
\newcommand{\gll}{{\mathfrak{gl}}}
\newcommand{\sol}{{\mathfrak{o}}}
\newcommand{\gdois}{{\mathrm{G}_2}}
\newcommand{\SO}{{\mathrm{SO}}}
\newcommand{\SU}{{\mathrm{SU}}}
\newcommand{\rr}{\rightarrow}
\newcommand{\lrr}{\longrightarrow}
\newcommand{\calH}{{\cal H}}             %
\newcommand{\calP}{{\cal P}}             %
\newcommand{\calR}{{\cal R}}             %
\newcommand{\na}{{\nabla}}
\newcommand{\tr}[1]{{\mathrm{tr}}\,{#1}}
\newcommand{\Ad}[1]{{\mathrm{Ad}}\,{#1}}
\newcommand{\dx}{{\mathrm{d}}}
\newcommand{\inv}[1]{{#1}^{-1}}
\newcommand{\papa}[2]{\frac{\partial#1}{\partial#2}}
\newcommand{\vol}{{\mathrm{vol}}}
\newcommand{\Vol}{{\mathrm{Vol}}}
\newcommand{\ric}{{\mathrm{Ric}\,}}
\newcommand{\Scal}{{\mathrm{Scal}}}
\newtheorem{teo}{Theorem}[section]
\newtheorem{lemma}{Lemma}[section]
\newtheorem{coro}{Corollary}[section]
\newtheorem{prop}{Proposition}[section]
\newenvironment{rema}[1][Remark.]{\begin{trivlist}
\item[\hskip \labelsep {\bfseries #1}]}{\end{trivlist}}
\def\cyclic{\mathop{\kern0.9ex{{+}
\kern-2.2ex\raise-.28ex\hbox{\Large\hbox{$\circlearrowright$}}}}\limits}
\begin{document}

\title{Self-duality and associated parallel or cocalibrated $\gdois$ structures}

\author{R. Albuquerque}

\maketitle

\

\begin{abstract}

We find a remarkable family of $\gdois$ structures defined on certain principal $\SO(3)$-bundles $P_\pm\lrr M$ associated with any given oriented Riemannian 4-manifold $M$. Such structures are always cocalibrated. The study starts with a recast of the Singer-Thorpe equations of 4-dimensional geometry. These are applied to the Bryant-Salamon cons\-truction of complete $\gdois$-holonomy metrics on the vector bundle of self- or anti-self-dual 2-forms on $M$. We then discover new examples of that special holonomy on disk bundles over $\calH^4$ and $\calH^2_\C$, respectively, the real and complex hyperbolic space. Only in the end we present the new $\gdois$ structures on principal bundles.

\end{abstract}

\vspace*{12mm}

\noindent
{\bf Key Words:} self-dual metric, calibration, holonomy, $\gdois$ structure

\vspace*{1mm}
\noindent
{\bf MSC 2010:} Primary:  53C25, 53C38; Secondary: 53C20, 53C28, 53C29

\vspace*{22mm}

\markright{\sl\hfill  R. Albuquerque \hfill}

\section*{Introduction}

The group $\gdois$ of automorphisms of the octonions is equally characterised as the group of invariants of a certain 3-form $\phi\in\Lambda^3(\R^7)^*$. This Lie subgroup of $\SO(7)$ gives rise to a 7-dimensional special Riemannian geometry, whose basics are very well-known today. A $\gdois$ structure on a 7-manifold is given by a reduction of the manifold structure group to $\gdois$. It is equivalently given by a certain 3-form over the manifold. Ever since a thorough study by Bryant and Salamon came to light, in \cite{Bryant2,BrySal,Sal3}, the geometry of $\gdois$ structures has deserved much attention and led to various deep insights and questions.

Let $M$ be a 4-dimensional oriented Riemannian manifold. The present article finds a new family of $\gdois$ structures associated to $M$. They are defined on the total spaces of two natural principal $\SO(3)$-bundles $P_+$ and $P_-\lrr M$, abbreviated $P_\pm$, of \textit{oriented} orthonormal coframe basis $\{e^1,e^2,e^3\}$ of self-dual and, respectively, anti-self-dual 2-forms on $M$. 

The following gives immediately a particular case, say a preferred $\gdois$ structure within the new family. Writing the connection 1-form $\omega\in\Omega^1_{P_\pm}(\sol(3))$, induced from the Levi-Civita connection of $M$ on the vector bundle $\Lambda_\pm^2T^*M$, as
\begin{equation*}
 \omega=\left[\begin{array}{ccc}
              0 & -\omega^3 &\omega^2 \\
              \omega^3& 0 &-\omega^1 \\
              -\omega^2 &\omega^1 &0
              \end{array} \right]  
\end{equation*}
then a $\gdois$ structure 3-form $\phi$ on $P_\pm$ is defined by:
\[ \phi= \omega^1\wedge\omega^2\wedge\omega^3
        \mp(e^1\wedge\omega^1+e^2\wedge\omega^2+e^3\wedge\omega^3) \ .  \]
One may say that a basic knowledge of the theory up to Bianchi identity in 4-dimensional geometry is most sufficient in order to prove $\phi$ is coclosed.

The family of $\gdois$ cocalibrated metrics explicitly found is a natural variation of the above preferred structure. The abundance of these examples is consistent with an existence result on spin manifolds and the h-principle of cocalibrated structures, proved in \cite[Theorem 1.8]{CrowleyNordstrom}. It is also important for the construction of $\mathrm{Spin}(7)$ metrics on $P_\pm\times(-\epsilon,\epsilon)$, $\epsilon>0$, if one proceeds with the `Hitchin flow' technique. From another perspective, the cocalibration $(P_\pm,\phi)$ is quite surprising since it reveals a new kind of twistorial framework for the study of oriented Riemannian 4-manifolds and, therefore, also a potential for new functor relations between 4- and 7-dimensional geometry. More plainly, our result compares with the well-known theorem which says that every cotangent bundle \textit{is} a symplectic manifold.

We start our study with a recast of the theory of connections on principal coframe bundles and the Singer-Thorpe decomposition of the curvature tensor of a Riemannian 4-manifolds. We have given below a quite independent proof of this decomposition. These well-known results are used along the later proofs of the main theorems.

We also present an introduction to fundamental notions and equations of $\gdois$ geometry. Then we revisit the $\gdois$-holonomy metrics on $X_\pm=\Lambda^2_\pm T^*M$, constructed by R. Bryant and S. Salamon in \cite{BrySal,Sal3}, somehow willing to honour their discovery of true $\gdois$-holonomy. We compute the fundamental torsion equations of \cite{Bryant1,FerGray} on $X_\pm$, for $M$ anti-self-dual, or self-dual for the \textit{minus} case, which are finally related by an elementary lemma about two 1-variable dependent positive functions (throughout the paper we work in the smooth category). The torsion forms entail many new unsolved questions. As our computations are also accomplished for the bundle of self-dual 2-forms, we use results of C.~LeBrun (\cite{LeB1,LeB3}) to deduce that to every $K3$ surface with Calabi-Yau metric there corresponds a 2-parameter family of parallel $\gdois$ structures on $\Lambda^2_+T^*K3$.

Our last chapter contains the general equations of the new $\gdois$ structures on the manifolds $P_\pm$. These structures are always cocalibrated. There remain non-vanishing torsion forms, which we also find.

In particular, the family of cocalibrated structures on $P_-$ over $S^4$ or $\C\Proj^2$ may be chosen to be nearly parallel, with arbitrarily chosen positive $\|\dx\phi\|_\phi>0$.

The author acknowledges Anna Fino for her commenting of a first draft of this article. He greatly acknowledges the anonymous referees who very much improved a final version of the text.

\section{Riemannian 4-manifolds and {$\gdois$} structures on 7-manifolds}
\label{sec:one}

\subsection{Frame bundle and connection forms}
\label{sec1:subone}

We start by recalling some classical elements of differential and Riemannian geometry, which may be seen in many references such as \cite{Hel,KobNomi}. Introducing notation, given a manifold $Y$ and a vector bundle $E\rr Y$, we let $\Omega^p_Y(E)$ or $\Omega^p(Y,E)$ represent the space of smooth sections $\Gamma(Y;\Lambda^pT^*Y\otimes E)$. Also, we let $\Omega^p_Y=\Omega^p_Y(\R)$.

Let $M$ denote a smooth $n$-dimensional manifold and let $F^*M$ be the principal $\mathrm{GL}(n,\R)$-bundle of coframes. A coframe  $e\in F^*M$ is a linear isomorphism $(e^1,\ldots,e^n):T_mM\lrr\R^n$, $m\in M$. The natural Lie group right-action $(e,g)\mapsto R_g(e)=e\cdot g$ is defined by $e\cdot g=(\sum_jg^1_je^j,\ldots,\sum_jg^n_je^j)$, for $g\in \mathrm{GL}(n,\R)$.

Using the bundle projection $\pi:F^*M\lrr M$ we have a canonical $\R^n$-valued 1-form $\theta$ on $F^*M$, the so-called \textit{soldering} form. It gives a first example of a tautological form, as it is defined by
\begin{equation}
 \theta_e=e\circ \pi_*\ .
\end{equation}

Now suppose the manifold is endowed with a linear connection, that is, a covariant derivative $\na$ on the tangent bundle of $M$.

Given any local section $s=(e^1,\ldots,e^n):U\rr F^*M$ on an open subset $U\subset M$, we then have a matrix-valued 1-form $\omega$ induced by the covariant derivative: $\na e^i=\sum_je^j\otimes\omega^i_j$. In obvious notation we may write this as $s\,\omega^i_\cdot$, with such matrix 1-form existing on $U$.

Now a natural extension $\dx^\na$ of $\na$ as a differential operator on the relevant space leads us to the notion of the \textit{curvature} tensor $R^\na=(\dx^\na)^2$ and, locally, to a \textit{curvature} form $\rho^i_k$. Respectively, a $T^*M$-valued 2-form on $M$
\begin{equation}\label{curvature0}
 R^\na_{Z_1,Z_2}e^i=\na_{Z_1}\na_{Z_2}e^i-\na_{Z_2}\na_{Z_1}e^i-\na_{[Z_1,Z_2]}e^i,\quad\forall Z_1,Z_2\in TM\ ,
\end{equation}
and a Lie algebra $\gll(n,\R)$-valued 2-form on $U$
\begin{equation}\label{curvature1}
 \rho^i_k=\dx\omega^i_k+\sum_j\omega^j_k\wedge\omega^i_j\ .
\end{equation}
Of course, (\ref{curvature0}) and (\ref{curvature1}) are related by $R^\na e^i=s\rho^i_\cdot$ and, differentiating again, gives a Bianchi identity.

More important here is the fact that the connection can be completely described over the manifold $F^*M$. Indeed, there exists a unique globally defined $\omega\in \Omega^1(F^*M,\gll(n,\R))$ such that
\begin{equation}
 \na s=s\,s^*\omega,\quad \forall s\in \Omega^0(U,F^*M)\ , 
\end{equation}
and such that, for any \textit{fundamental} vertical vector field $V_e\in TF^*M,\ e\in F^*M$,
\begin{equation}
   \omega(V_e)=V\qquad 
   \bigl(\mbox{by definition},\ V_e=\left.\frac{\dx}{\dx t}\right\vert_0  e\cdot\mathrm{exp}(tV),\,\ V\in \gll(n,\R)\bigr)\ .
 \end{equation}
From this and the existence of time-dependent parallel sections we have that $H=\ker\omega$ is complementary to the vertical tangent subbundle $\ker\pi_*\subset TF^*M$. It follows easily that $R_g^*\omega=\Ad(\inv{g})\omega,\ \forall g\in \mathrm{GL}(n,\R)$. And, hence, that $\dx{R_g}(H_e)=H_{e\cdot g}$.

Let us recall the connection $\na$ is given on the tangent bundle of $M$. Here we must consider the \textit{torsion}, defined by $T^\na=\dx^\na1$. Letting $\breve{s}=(e_1,\ldots,e_n)$ denote a frame dual to the previous $s$, we may then define equivariantly an $\R^n$-valued 2-form $\tau$ on $F^*M$, vanishing on vertical directions and such that $T^\na=\breve{s}\,s^*\tau^t$.

The connection 1-form of $TM$ is $-\omega^t$, i.e. it satisfies $\na e_i=-\sum_je_j\,\omega^j_i$ or just $\na\breve{s}=-\breve{s}\,s^*\omega^t$, because simply one requires $\na1=0$. The following are two fundamental equations due to \'E.~Cartan regarding the torsion and the \textit{curvature} of any linear connection on the principal bundle of coframes.
\begin{prop}[Cartan structural equations]
We have
\begin{equation}\label{cartan1}
\tau=\dx\theta+\theta\wedge\omega\ ,\qquad\qquad   \rho=\dx\omega+\omega\wedge\omega\ .
\end{equation}
\end{prop}
\begin{proof}
In order to readily establish the theory, we give the proof with as much detail as possible. First the map $\breve{s}\, s^*\theta^t=\sum_j e_j\theta^js_*=\sum_je_je^j=1_{|U}$ is the identity endomorphism of $TM$. Then
we have $\breve{s}\,s^*\tau^t=
\dx^\na1_{|U}=\dx^\na(\breve{s}\, s^*\theta^t)= \breve{s}(-s^*\omega^t\wedge s^*\theta^t+\dx s^*\theta^t)= \breve{s}\,s^*(\theta\wedge\omega+\dx\theta)^t$. Let us see that for a vertical direction $V_e$, we have $(\dx\theta+\theta\wedge\omega)(V_e,\cdot)=0$. This is trivial if the second entry is vertical too, so we consider a lift $Z_{e\cdot g}=\dx R_g(s_*(Z))$ of $Z\in TM$, with $s$ a section passing by $e$, and compute:
\begin{equation*}
 [V,Z]_e \ =\ \lim_{t\rr0}\frac{1}{t}(\dx R_{\mathrm{exp}(-tV)}(Z_{e\cdot\mathrm{exp}(tV)})-Z_e)\ =\ 0 , 
\end{equation*}
\begin{eqnarray*}
 {\lefteqn{ (\dx\theta+\theta\wedge\omega)(V_e,Z_e) \ =\   \dx(\theta(Z))(V_e)-\dx(\theta(V))(Z_e)-\theta([V,Z])-\omega(V_e)\theta(Z_e)  }}\\
  &=& \left.\frac{\dx}{\dx t}\right\vert_0(\theta Z)_{e\cdot\mathrm{exp}(tV)}-V(e(Z))\ =\ \left.\frac{\dx}{\dx t}\right\vert_0e\cdot\mathrm{exp}(tV)(Z)-V(e(Z)) \ =\ 0 .
\end{eqnarray*}
Regarding the curvature equation in \eqref{cartan1}, with the above coframe we find $R^\na s=\dx^\na(s\, s^*\omega)=s\,s^*(\omega\wedge\omega+\dx\omega)$ which by definition is $R^\na s=s\, s^*\rho$, as in \eqref{curvature1}. Arguments such as the previous yield $\rho(V,\cdot)=0$, proving $\rho$ is well-defined and equivariant.
\end{proof}
We recall that $\theta,\omega$, and hence $\tau$ and $\rho$, are global differential forms on $F^*M$.

A connection is said to be \textit{reducible} to a principal $G$-subbundle $Q$ of $F^*M$, where $G$ is a Lie subgroup of the general linear group, if $\ker\omega_{|Q}\subset TQ$.

The previous classical theory extends to any vector bundle $X\lrr M$ which is associated to a coframe principal $G$-bundle $Q\lrr M$. This is, when it is given a representation $\sigma:G\rr\mathrm{GL}(V)$, where $V$ is a vector space, so that we may write $X=Q\times_\sigma V$. This means a vector in $X$ identifies with a pair $(q,f)\in Q\times V$ or any representative of its equivalence class, $(qg,\sigma(\inv{g})f)$, for $g\in G$, the usual orbit of $G$. If $s$ is a section of $Q$ on an open set $U\subset M$ and $f$ is any $V$-valued function on $U$, then $f$ determines a unique $G$-equivariant function $\hat{f}:\inv{\pi_{|Q}}(U)\rr V$ such that $f=\hat{f}\circ s$; with equivariant meaning that $\sigma(\inv{g})\hat{f}(s)=\hat{f}(sg)$, $\forall g\in G$. Reciprocally, any equivariant function on $Q$ determines a section of $X\lrr M$. Finally, we covariant differentiate sections of $X$ through the class-independent formula
\begin{equation}
 \na_Z(s,f)=(s,\hat{\sigma}\cdot s^*\omega(Z)f+\dx f(Z)),\quad\forall Z\in TM\ , 
\end{equation}
where $\hat{\sigma}:\g\lrr\gll(V)$ is the induced map from $\sigma$. To see this is well-defined on $X$ it is necessary to prove first $(sg)^*\omega=\Ad(\inv{g})s^*\omega+\inv{g}\dx g$, where $g$ is any $G$-valued function defined on the domain of $s$. However, we shall not really need this formula in what follows.

Now we suppose $M$ is also an oriented Riemannian manifold with metric $\mathrm{g}=\langle\ ,\ \rangle$. Then there is a canonical torsion-free metric connection, the Levi-Civita connection, and all the above remains true on the principal $\SO(n)$-bundle $F_\circ^*M$ of oriented orthonormal coframes. Because $\omega$ defines a metric connection, the matrix of 1-forms $\omega^i_j$ is skew-symmetric. Moreover, any 1-form $\kappa=\tr(\kappa\circ1)$ or 0-section of $T^*M$ satisfies $0=\dx^2\kappa=\tr(R^\na\kappa\wedge1)$ for a torsion-free connection. This leads to the so-called \textit{first} Bianchi identity
\begin{equation} \label{firstBianchiidentity}
R^\na_{e_\alpha,e_\beta}e^\gamma+R^\na_{e_\beta,e_\gamma}e^\alpha+R^\na_{e_\gamma,e_\alpha}e^\beta=0\ .
\end{equation}

\subsection{Self-duality on Riemannian 4-manifolds}
\label{sec1:subtwo}

Recall that a star operator $*$ is defined on $\Lambda^2(\R^4)^*$ by $\alpha\wedge*\beta=\langle\alpha,\beta\rangle\vol$. Since it depends on the orientation of $\R^4$ and $*^2=1$, we have $\pm1$ eigenspaces $\Lambda_\pm^2$ of $*$ of equal dimension. The representation of $\SO(4)$ on the space of 2-forms is reducible to each eigenspace and clearly contains $\Z_2=\{1,-1\}$ in the kernel. Counting dimensions, we may introduce two compatible complex structures on $\R^4$ to further deduce the identity $\SO(4)=\SU(2)\times\SU(2)\,/\Z_2$. We thus find $\sol(4)=\sol(3)\oplus\sol(3)$.

Now let $M$ be a connected oriented Riemannian 4-manifold and let us continue with the same notation as above. Then we have a star or Hodge operator $*_{_M}$ on $M$ which, moreover, commutes with covariant differentiation. Hence we have parallel subbundles:
\begin{equation}\label{decompselfandantiself}
 \Lambda^2T^*M=\Lambda^2_+\oplus\Lambda^2_-\ .
\end{equation}

A similar picture as the one from section \ref{sec1:subone} then follows for the principal $\SO(3)$-bundles $P_\pm\lrr M$ of oriented and \textit{$\sqrt{2}$-orthonormal}, i.e. orthogonal and norm $\sqrt{2}$, coframes of $\Lambda^2_\pm$. The group acting is $\SO(3)$ since the rank of $\Lambda^2_\pm$ is 3. By the last term \textit{oriented} we just mean some choice made of one of the two connected-components of the bundle of $\sqrt{2}$-orthonormal coframes of each of those vector bundles associated to $M$.

The spaces $P_\pm$ are nevertheless transformed by $\SO(4)$ under the right-action. Choosing any oriented orthonormal coframe $e=(e^4,\ldots,e^7)\in F_\circ^*M$, we then have two new coframes for the vector bundles of \textit{self-dual} and \textit{anti-self-dual} 2-forms, respectively\footnote{These coframes will be useful later but in separate moments, hence we only introduce the $+$ or $-$ on the $e^i,\ i=1,2,3$, or in other objects, when necessary. We adopt the common notation $e^{\alpha\beta}=e^\alpha\wedge e^\beta$.}:
\begin{equation}\label{self-dualformsandantiself-dualforms}
 e^1=e^1_\pm=e^{45}\pm e^{67}\ ,\qquad e^2=e^2_\pm=e^{46}\mp e^{57}\ ,\qquad e^3=e^3_\pm=e^{47}\pm e^{56}\ .
\end{equation}
This induced coframe $(e^1,e^2,e^3)$ in fact determines invariantly the above choice of $P_\pm$, and hence confirms that the $\Lambda^2_\pm T^*M\lrr M$ are oriented vector bundles. Let us prove this on just one space, say $P_+$, for clarity. Any oriented coframe on $M$ equals $e\cdot g$, for some $g\in\SO(4)$, and any $\sqrt{2}$-orthonormal oriented coframe of $\Lambda^2_+$ is of the previous type, by linear algebra. The orientation of $((e\cdot g)^1,(e\cdot g)^2,(e\cdot g)^3)=(e^1\cdot g,e^2\cdot g,e^3\cdot g)=(e^1,e^2,e^3)\cdot \tilde{g}$ is fixed by $g\in\SO(4)$ since this group is connected and acts transitively. Then
\begin{equation}\label{theprincipalbundles}
 p_+:F_\circ^*M\lrr P_+\qquad\quad\mbox{and}\quad\qquad p_-:F_\circ^*M\lrr P_-
\end{equation}
are equivariant maps defined by $p_\pm(e)=p_\pm(e^4,e^5,e^6,e^7):=(e^1,e^2,e^3)$. The kernel of the group homomorphism $g\mapsto\tilde{g}$ is a normal subgroup $H$, containing $\{1,-1\}$, such that $\SO(4)/H=SO(3)$. We may say $H\simeq\SU(2)$. Hence the orientation is well-defined by the choice in (\ref{self-dualformsandantiself-dualforms}$_+$).

The induced connections on $P_\pm$ are again denoted by an $\omega=\omega_\pm\in \Omega^1_{P_\pm}(\sol(3))$, although now given by $\na p=p\,p^*\omega$ where $p=p_\pm\circ s$ and $s:U\subset M\lrr F_\circ^*M$ is any local section as before and
\begin{equation}\label{connectionformofselforantiself-dualbundle}
 \omega=\left[\begin{array}{ccc}
              0 & -\omega^3 &\omega^2 \\
              \omega^3& 0 &-\omega^1 \\
              -\omega^2 &\omega^1 &0
              \end{array} \right] \qquad\mbox{with}\qquad\begin{cases}
  p^*\omega^1=\omega^6_7\pm\omega^4_5\\
  p^*\omega^2=\omega^7_5\mp\omega^6_4 \\ 
  p^*\omega^3=\omega^5_6\pm\omega^4_7
\end{cases}\ .
\end{equation}
The curvature tensor $R^{\Lambda^2}$ satisfies $R^{\Lambda^2}_\pm\,p=p\,p^*\rho$ for a new 2-form also denoted by $\rho\in \Omega^2_{P_\pm}(\sol(3))$. Next we define the tautological form $\eta=p_\pm(\theta^4,\ldots,\theta^7)$ as the push-forward by $p_\pm$ of the soldering form components. This 2-form is abbreviated henceforth as $\eta=(e^1,e^2,e^3)$, without risk of confusion. We find an important result, which holds on the manifold $P_\pm$ by equivariance and follows consistently with all previous structure equations.
\begin{prop}\label{cartan2eBianchi2erho}
On $P_\pm$ we have
\begin{equation}\label{cartan2}
 \dx\eta=\eta\wedge\omega
\end{equation}
and
\begin{equation}\label{Bianchi2}
 0=\eta\wedge(\omega\wedge\omega+\dx\omega)=\eta\wedge\rho
\end{equation}
where $\rho$ is the curvature 2-form
\begin{equation}\label{curvatureformofselforantiself-dualbundle}
 \rho=\dx\omega+\omega\wedge\omega=
 \left[\begin{array}{ccc}
              0 & -\rho^3 &\rho^2 \\
              \rho^3& 0 &-\rho^1 \\
               -\rho^2 &\rho^1 &0
        \end{array} \right]      \quad\mbox{with}\quad
      \begin{cases}
      \rho^1=\rho^6_7\pm\rho^4_5\\
      \rho^2=\rho^7_5\mp\rho^6_4 \\ 
      \rho^3=\rho^5_6\pm\rho^4_7
     \end{cases}\ .
\end{equation}
\end{prop}
\begin{proof}
 We find indeed $R^{\Lambda^2}_\pm\,p=p\,p^*\rho$ and the formulae  $\dx\omega^3+\omega^1\wedge\omega^2=\rho^5_6\pm\rho^4_7$, etc.
\end{proof}

Let us recreate the celebrated representation theory of the Riemannian curvature tensor, which is due to Singer and Thorpe, cf. \cite{Besse}.

One can prove that the curvature tensor $R^\na$ of the Riemannian 4-manifold $M$ is symmetric when it is seen as a section of $S^2(\Lambda^2 T^*M)$, by the identity in (\ref{Bianchi2}). 

Let $\{e_4,e_5,e_6,e_7\}$ be a dual frame of the above. One defines a map $\calR:\Lambda^2\lrr\Lambda^2$ by
\begin{equation}\label{curvature4}
  \langle \calR(e_\alpha\wedge e_\beta),e_\gamma\wedge e_\delta\rangle
  =-\langle R^\na(e_\alpha,e_\beta)e_\gamma,e_\delta\rangle
  =R^\na_{\alpha\beta\gamma\delta}\ .
\end{equation}
Then there are invariantly defined maps $A,B,B^*,C$ respecting the
decomposition \eqref{decompselfandantiself}, i.e. such that
\begin{equation}\label{curvaturematrix}
\calR=\left[\begin{array}{cc}
                     A & B \\ B^* & C
                    \end{array}\right]\ .
\end{equation}
\begin{lemma}[Singer-Thorpe] \label{LemmaSingerThorpe}
 The map $\calR$ is symmetric, $B$ corresponds to the traceless part of the Ricci tensor $\ric=\sum_{\alpha=4}^7 \langle R(\ ,e_\alpha)e_\alpha,\ \rangle$ and $\tr{A}=\tr{C}=\frac{1}{4}\mathrm{tr_g}{\ric}=\frac{1}{4}\Scal_M$.
\end{lemma}
\begin{proof}
By all definitions, notice
\[ R^\na_{\alpha\beta\gamma\delta}
   =\langle\calR(e_\alpha\wedge e_\beta),e_\gamma\wedge e_\delta\rangle
  =-\langle R^\na_{e_\alpha,e_\beta}e_\gamma,e_\delta\rangle
  =-\langle R^\na_{e_\alpha,e_\beta}e^\gamma,e^\delta\rangle
  =\rho_\gamma^\delta(e_\alpha,e_\beta) \ .\]
  Using the frame $e^1_+,e^2_+,\ldots,e^3_-$, we may clearly write $\rho^i_+ =\sum_{j=1}^3\tilde{a}^i_je^j_++\tilde{\tilde{b}}^i_je^j_-$ for some scalar functions $\tilde{a}^i_j,\tilde{\tilde{b}}^i_j$.
 On the other hand, we have $\calR e^i_+=\sum_j\,\frac{1}{2}\calR_{ij}e^j_++\frac{1}{2}\calR_{i\bar{j}}e^j_-$ where $\calR_{ij}$ follows linearly from \eqref{curvature4}. With the dual frame $p_\pm(e_4,\ldots,e_7)=(e_{\pm,1},e_{\pm,2},e_{\pm,3})$ we then have $e^i_\pm(e_{\pm,j})=2\delta^i_j,\ e^i_\pm(e_{\mp,j})=0$,\ $\forall i,j=1,2,3$, and computations with \eqref{curvature1}, \eqref{curvatureformofselforantiself-dualbundle} yield
\begin{equation}\label{componentsofcurvaturetensor}
\tilde{a}^i_j=\frac{1}{2}\rho_+^i(e_{+,j})=-\frac{1}{2}\calR_{ij}\qquad\qquad 
\tilde{\tilde{b}}^i_j=\frac{1}{2}\rho_+^i(e_{-,j})
=-\frac{1}{2}\calR_{i\bar{j}}\ .
\end{equation}
In particular $-\tilde{a}$ is the matrix of $A$ and $-\tilde{\tilde{b}}$ is the matrix of $B^*$. Also $\rho^i_-=\sum_{j=1}^3{\tilde{b}}^i_je^j_++
\tilde{c}^i_je^j_-$, for some coefficients, and the same holds for $\calR e^i_-=\sum_j\,\frac{1}{2}\calR_{\bar{i}j}e^j_++\frac{1}{2}\calR_{\bar{i}\bar{j}}e^j_-$. Again one shows $\tilde{c}^i_j=+\frac{1}{2}\calR_{\bar{i}\bar{j}}$ and the three identities $2{\tilde{b}}^i_j=\rho^i_-(e_{+,j})=\calR_{\bar{i}j}=-2\tilde{\tilde{b}}^i_j$ which yield $\calR_{\bar{i}j}=\calR_{i\bar{j}}$.
By \eqref{Bianchi2} it is immediate that $a$ and $c$ are symmetric. For instance, on the self-dual part, we find $0=e^2\wedge\rho^3-e^3\wedge\rho^2=+2(\tilde{a}^3_2-\tilde{a}^2_3)e^{4567}$. This implies the whole symmetry of $\calR$. In particular $B^*$ is the adjoint of $B$. Recurring to the first Bianchi identity \eqref{firstBianchiidentity}, further computations on the above coefficients yield the relations with the tensor $\ric$.
 \end{proof}
Henceforth the curvature of the vector bundle of self-dual 2-forms encodes half of the Riemannian curvature tensor of $M$. A few lines of computation will show that $M$ is Einstein, i.e. the Ricci tensor is a multiple of the metric tensor, if and only if $B=0$. In other words, $M$ is Einstein if and only if $*\calR=\calR*$. If this is the case, then clearly orthogonal planes in $TM$ have the same sectional curvature. And reciprocally.

The invariant theory of $\mathrm{SO}(4)$ lets us define the tensors $W_+=A-\frac{1}{3}\tr{A}$ and $W_-=C-\frac{1}{3}\tr{C}$, which are called the \textit{self-dual} and \textit{anti-self-dual} \textit{Weyl} tensors of $M$. The so-called \textit{Weyl} tensor $W=W_++W_-$ is conformally invariant, since that is certainly the case with the star operator and each $W_\pm$ component does preserve the $\Lambda^2_\pm$.

The Riemannian manifold $M$ is \textit{self-dual} if $W=W_+$ and  \textit{anti-self-dual} if $W=W_-$. Clearly the former condition reads also as ($s=\frac{1}{12}\Scal_M=\frac{1}{3}\tr{A}=\frac{1}{3}\tr{C}$):
\begin{equation}\label{self-dualcurvatureequation}
  \mbox{(SD)}\qquad W_-=0\qquad\Longleftrightarrow\qquad\forall m\in M,\ \exists s\in\R\: :\ \rho^i_-=se^i_-+\sum_{j=1}^3\tilde{b}^i_je_+^j,\ \forall i\ ,
\end{equation}
whereas the latter corresponds with $\rho^i_+=-se^i_++\ldots$. 
In any dimension, if $M$ is Einstein, then $s$ is known to be a constant.

\subsection{{$\gdois$} structures}
\label{sec1:subthree}

$\gdois$ structures are well-known today and amount to 3-forms of special kind on a 7-dimensional manifold. One way to describe them is precisely within the above setting of distinguished 2-forms. Let us continue with the notation for self-duality from \eqref{self-dualformsandantiself-dualforms}, but now on some oriented Euclidean 4-space, say a \textit{horizontal} direction, which we complement with a 3-dimensional Euclidean space given by an orthonormal coframe, i.e. a set of three independent linear forms $f^1,f^2,f^3$, for the \textit{vertical} direction. Of course, we obtain a corresponding metric $\mathrm{g}=\mathrm{g}_V+\mathrm{g}_H$ in 7 dimensions. Then a linear $\gdois$ structure is defined on the direct sum vector space, just as in \cite{BrySal,Sal3}, by
\begin{equation}\label{standarphi}
 \phi= 
 \lambda^3f^{123}\mp\lambda\mu^2(f^1\wedge e^1+f^2\wedge e^2+f^3\wedge e^3)  \ .
\end{equation}
In the above we continue to abbreviate $e^i=e^i_\pm$. The coefficients $\lambda^3,\lambda\mu^2$ appearing are dependent on real scalars $\lambda,\mu$. A study of 3-forms of special type gives that the group of automorphisms of $\phi$, $\gdois$, is a simply-connected, compact, simple, 14 dimensional Lie subgroup of $\SO(7)$, with such special orthogonal group referring to some new metric $\mathrm{g}_\phi$ (cf. \cite{Bryant1}). An orientation form $o=\Vol_{\mathrm{g}}=f^{123}e^{4567}$ may be fixed once and for all, because the $\phi$ induced orientation is invariant by continuity on $\lambda,\mu$ in some open interval. The metric $\mathrm{g}_\phi$ is given, for some $m\in\R$ yet to be determined, and for any vectors $u,v$, by the well-known identity
\begin{equation}
 u\lrcorner\phi\wedge v\lrcorner\phi\wedge\phi
            = \pm6\langle u,v\rangle_{\phi}mo   \ .
\end{equation}
In the case of \eqref{standarphi}, after some lengthy but straightforward computations with the dual frame, we find the following result.
\begin{lemma}
 The frame $f_1,f_2,f_3,e_4,e_5,e_6,e_7$ is orthogonal and satisfies $\langle e_\alpha,e_\alpha\rangle_\phi=\frac{\lambda^3\mu^6}{m}$ and $\langle f_i,f_i\rangle_\phi=\frac{\lambda^5\mu^4}{m}$.
\end{lemma}
Indeed the definitions induce a unique metric irrespective of $\pm$. Now it follows that
\[ m^2=\frac{1}{\|f^{123}e^{4\cdots7}\|^2_{\phi}} =\frac{\lambda^{15}\mu^{12}}{m^3}\frac{\lambda^{12}\mu^{24}}{m^4}  \]
and hence the value of $m=\lambda^3\mu^4$. Also the metric and canonical volume form are:
\begin{equation}
 \mathrm{g}_\phi=\lambda^2\mathrm{g}_V+\mu^2\mathrm{g}_H\ ,\quad\qquad \Vol_{\mathrm{g}_\phi}=mo=\lambda^3\mu^4\Vol_{\mathrm{g}} \ .
\end{equation}
The orientations $o$ and $mo$ agree if and only if $\lambda>0$. We fix $\mu>0$ for convenience. Finally the star operator $*_\phi$ for $\mathrm{g}_\phi$ gives
\begin{equation}\label{standarphistandardpsi}
\begin{cases}
\qquad\quad\:\ \ \ \phi\ =\ \lambda^3f^{123}\mp\lambda\mu^2(f^1\wedge e^1+f^2\wedge e^2+f^3\wedge e^3)\\
\   \psi\ :=\ *_\phi\phi\ =\ \mu^4e^{4567}-\lambda^2\mu^2(e^1\wedge f^{23}+e^2\wedge f^{31}+e^3\wedge f^{12})
\end{cases}\ .
\end{equation}

Since the compatibility between the 3- and 4-dimensional subspace orientations is quite arbitrary, we comment on a further detail. It is quite natural that one starts with his own choice of a frame of self-dual 2-forms. For instance, say we pick $e^1,e^2,-e^3$ (or any other non-orientation preserving transformation in $\Lambda^2_+$). Then we may reverse the signs of $f^3$ and $\lambda$ in order to have the same orientation, $mo$, but the metric induced from the new 3-form \eqref{standarphi} will be of signature $(3,-4)$, a so-called $\tilde{\mathrm{G}}_2$ metric, where the automorphisms Lie group is now the non-compact dual of $\gdois$. In order to have a positive definite metric we would have to start by reversing the sign in the present $-\lambda\mu^2$ coefficient in \eqref{standarphi}. For example, without further ado, we see the $\gdois$ structure $f^{123}+f^1e_+^1+f^2e_+^2-f^3e_+^3$ is used in celebrated references such as \cite{Bryant1,Bryant2,FriIva1,FriIva2,Joy}.

A $\gdois$ \textit{structure} on a 7-dimensional manifold $X$ is given by a smooth 3-form $\phi\in\Omega^3_X$ of the form \eqref{standarphi} in some given coframe $f^1,\ldots,e^7$. Then there is an induced metric $\mathrm{g}_\phi$ and compatible orientation on $X$, as we have seen fibre-wise and for similar reasons the same must hold globally. The structure is furthermore reducing the holonomy of the Levi-Civita connection $\na$ of this metric to $\gdois$ if and only if $\na\phi=0$. That is, any endomorphism of $T_xX$ induced by parallel displacement over a contractible loop around $x$ lies in the Lie group. Such a structure is called \textit{parallel} or \textit{1-flat}. A theorem of Fern\'andez and Gray asserts this is equivalent to $\phi$ being harmonic, cf. \cite[Theorem 5.2]{FerGray}.

The classification of $\gdois$ structures is further developed in \cite{FerGray} and \cite{Bryant2}. It depends on four forms $\tau_i\in\Omega^i_X$ for $i=0,1,2,3$, which appear fibre-wise in $\Lambda^iT^*X$ as $\gdois$-modules $W_i$ of dimensions, respectively, $1,7,14,27$. While the first two representation spaces $W_0,W_1$ are obvious, the third one is $W_2=\g_2=\{\tau_2:\ \tau_2\wedge\phi=\mp*_\phi\tau_2\}$ and the fourth one is $W_3=\{\tau_3:\ \tau_3\wedge\phi=\tau_3\wedge\psi=0\}$. The forms indeed exist and appear in (recall $\psi=*_\phi\phi$)
\begin{equation}\label{classificationequationsofG2structures}
\begin{cases}
 \dx\phi=\tau_0*_\phi\phi+\frac{3}{4}\tau_1\wedge\phi+*_\phi\tau_3\\
 \dx\psi=\tau_1\wedge\psi+\tau_2\wedge\phi
\end{cases}\ .
\end{equation}
Equations $\dx\phi=0$ and $\dx*_\phi\phi=0$, respectively, are those of a \textit{calibrated} and \textit{cocalibrated} $\gdois$ structure. As said above, having both conditions is the same as $\na\phi=0$. Like many authors we also reserve the name $\gdois$-\textit{manifold} for the parallel case. If $\dx\phi=\tau_0\,\psi$ with $\tau_0\neq0$ a constant, then we have a \textit{pure type} $W_0$ or \textit{nearly parallel} structure, cf. \cite{Agri1}. For each $i$, the structures are called of \textit{pure type} $W_i$ if the only non-zero component is $\tau_i$. Pure type $W_1$ is the same as locally conformally parallel, since $\tau_1$ must be closed, i.e. locally exact.

\section{The Bryant-Salamon {$\gdois$}  manifolds}
\label{sec2}

\subsection{Structure equations for {$X_+$} and {$X_-$}}
\label{sec2:subsection1}

This section is based on the famous construction of $\gdois$ structures found in \cite{Sal3,Sal4,BrySal}. We give a new description of their fundamental equations and, moreover, we find the respective torsion forms, in Theorem \ref{torsionformsofX_pm} below.

The manifolds $X_\pm=\Lambda_\pm^2T^*M=P_\pm\times_{\SO(3)}\R^3$, where the representation is the canonical one, are natural vector bundles associated to a given oriented Riemannian 4-manifold $M$. Such manifolds carry many rich $\gdois$ structures. We shall treat the $\pm$ cases simultaneously, occasionally forgetting the subscript notation. This shall be the case of the 3-form $\phi$, over $X_\pm$, which is defined as follows assuming much of the notation from previous sections.

A point $x\in X_\pm$ may be written as $x=pa^t$, where $p=(e^1,e^2,e^3)$ constitutes a coframe of self- or anti-self-dual forms and $a=(a^1,a^2,a^3)$ is a vector of $\R^3$. Then the 2-form $\eta$ from Proposition \ref{cartan2eBianchi2erho} induces another tautological 2-form, $\eta a^t$, well-defined on $X_\pm$. As well as the scalar function $r=\dfrac{1}{2}\|\eta a^t\|_{_M}^2=aa^t$. By \eqref{cartan2}, we have
\begin{equation}\label{derivadaouformaexacta}
 \dx(\eta a^t)=\eta\wedge(\omega a^t+\dx a^t)=\eta\wedge f^t 
 \end{equation}
where
\begin{equation}
 f=\dx a+a\omega^t=\dx a-a\omega \ . 
\end{equation}
Using either this identity or the pullback connection to $X_\pm$ from $\na$ on $M$, we find
\begin{equation}\label{derivadaexterior_r}
 \dx r=2fa^t \ .
\end{equation}
\begin{rema}
 With the intent of easing the reading and no fear of inducing much confusion, from now on we abbreviate notation by dropping the wedge product symbol.
\end{rema}
Next we introduce a diligent tool to deal with several computations. 
Consider the linear map which sends $\alpha\in\Omega^k(\R^3)$, $k\geq0$, to the $\sol(3)$-valued $k$-form $\check{\alpha}$ exactly in the shape of the matrix $\omega=(\omega^1,\omega^2,\omega^3)^\vee$ in \eqref{connectionformofselforantiself-dualbundle}. This is,
\begin{equation}\label{littletool}
 \mbox{if}\ \alpha=(\alpha^1,\alpha^2,\alpha^3),\ \mbox{then}\ \check{\alpha}=\alpha^\vee=
 \left[\begin{array}{ccc}  0 & -\alpha^3 & \alpha^2 \\
  \alpha^3 &0 & -\alpha^1 \\ -\alpha^2 & \alpha^1 &0   \end{array}\right]\ .
\end{equation}
In coherence with our notation we also\footnote{We keep the notation for $\omega$ and $\rho$, the only two exceptions, everywhere referring the matrices defined earlier.} have $\rho=\check{\rho}$. We let $\cdot^\wedge$ denote the left inverse map, defined for any matrix $A$ by $A^\wedge=(a_{32},-a_{31},a_{21})$. We have $(A^\wedge)^\vee=A$ if and only if $A$ lies in the orthogonal Lie algebra. The following identities are trivial to check:
\begin{equation}\label{wedgeveeidentities}
 (\check{\alpha}\check{\delta})^\wedge=(\alpha^1\delta^2,-\alpha^1\delta^3,\alpha^2\delta^3)
 \quad\quad\mbox{and}\quad\quad(\alpha\check{\delta})^\vee=\check{\alpha}\check{\delta}-
 (-1)^{\deg\alpha\deg\delta}\check{\delta}\check{\alpha}\ .
\end{equation}

Returning to our $\gdois$ matter, the components $f=(f^1,f^2,f^3)$ give us the required base of 1-forms with which one defines a structure $\phi$ in the same fashion as \eqref{standarphi}. We define $\beta=f^{123}$ and $\vol=e^{4567}$ since in fact this is the pullback to $X_\pm$ of the volume form of $M$. Henceforth $\phi=\lambda^3f^{123}\mp\lambda\mu^2\eta f^t=\lambda^3\beta\mp\lambda\mu^2\dx(\eta a^t)$ where $\lambda,\mu$ are scalar functions on $X_\pm$, cf. \eqref{standarphistandardpsi}. Also $\psi=\mu^4\vol-\lambda^2\mu^2\eta h^t$ where the 2-form $h$ is defined by $h=(\check{f}\check{f})^\wedge=(f^{23},f^{31},f^{12})$; notice 
$\check{h}=-f^tf=\check{f}^2=\frac{1}{2}(f\check{f})^\vee$.
\begin{prop}
 We have
\begin{equation}\label{structureG2BryantSalamon}
 \begin{cases}
  \dx\phi=\dx\lambda^3\,\beta+\lambda^3h\rho a^t\mp\dx(\lambda\mu^2)\eta f^t\\
  \dx\psi=\dx\mu^4\,\vol-\dx(\lambda^2\mu^2)\eta h^t+\lambda^2\mu^2\eta\check{f}\rho a^t
 \end{cases}\ .
\end{equation}
\end{prop}
\begin{proof}
 It is easy to see that $\dx f=-f\omega-a\rho$.
Applying \eqref{wedgeveeidentities} several times, we find $f\omega\check{f}=-h\omega$ and thence
\begin{equation}
 \dx h=(\dx\check{f}\,\check{f}-\check{f}\dx\check{f})^\wedge=(\dx f)\check{f}=-f\omega\check{f}-a\rho\check{f}= h\omega-a\rho\check{f}\ .
\end{equation}
Since $\beta=\frac{1}{3}hf^t$, we have (in fact $h\omega f^t=0$)
\begin{equation}
 \dx\beta=\frac{1}{3}(\dx h\,f^t+h\dx f^t)=\frac{1}{3}(h\omega f^t-a\rho\check{f}f^t
 -h\omega f^t+h\rho a^t)=h\rho a^t 
\end{equation}
and
\begin{equation}
  \dx(\eta h^t)=-\eta\check{f}\rho a^t\ .
\end{equation}
Since $\eta f^t$ is exact, \eqref{derivadaouformaexacta}, the result follows.
\end{proof}

For the solution of several $\gdois$ equations we follow \cite{BrySal,Sal3} and consider $\lambda,\mu$ as functions of the half square-radius $r$.
\begin{prop}
Let us consider the spaces $X_\pm=\Lambda_\pm^2T^*M$ with the generic Bryant-Salamon $\gdois$ structure $\phi$ and assume $\lambda$ and $\mu$ are only dependent of $r$. We have that $\dx\phi=0$ implies the metric of $M$ is Einstein.
\end{prop}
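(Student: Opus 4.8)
The plan is to feed the hypothesis $\dx\phi=0$ into the structure equation \eqref{structureG2BryantSalamon} for $\dx\phi$ and to exploit the radial dependence of $\lambda,\mu$ so as to reduce it to a pointwise algebraic identity on the fibre, which will then force the off-diagonal block of the curvature operator $\calR$ to vanish. First I would use that $\lambda,\mu$ depend on $r=aa^t$ only, so by \eqref{derivadaexterior_r} we have $\dx\lambda^3=2(\lambda^3)'\,fa^t$ and $\dx(\lambda\mu^2)=2(\lambda\mu^2)'\,fa^t$, where $'$ denotes $\dx/\dx r$. The crucial simplification is that the first term of $\dx\phi$ drops out: since $fa^t=\sum_i a^if^i$ is a combination of the $f^i$, we get $\dx\lambda^3\,\beta=2(\lambda^3)'(fa^t)\wedge f^{123}=0$.

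Hence $\dx\phi=0$ is equivalent to the single identity
\[
\lambda^3\, h\rho a^t=\pm2(\lambda\mu^2)'\,(fa^t)\wedge\eta f^t .
\]
Next I would read this as an equality of forms of bidegree $(2,2)$ in the coframe $\{e^4,\dots,e^7,f^1,f^2,f^3\}$ of $X_\pm$, that is, horizontal degree two and vertical degree two. On the left, $h=(f^{23},f^{31},f^{12})$ supplies the vertical $2$-forms, while the curvature entries $\rho^i$, being basic, supply the horizontal $2$-forms; on the right the vertical part again comes from the $f^{ij}$ and the horizontal part from $\eta$. Treating the case $X_+$ (the other being entirely symmetric), I would collect both sides on the linearly independent vertical $2$-forms $f^{12},f^{23},f^{31}$, obtaining on the left $\lambda^3\big[(a^2\rho^1-a^1\rho^2)f^{12}+(a^3\rho^2-a^2\rho^3)f^{23}+(a^1\rho^3-a^3\rho^1)f^{31}\big]$ and on the right $\pm2(\lambda\mu^2)'\big[(a^1e^2-a^2e^1)f^{12}+(a^2e^3-a^3e^2)f^{23}+(a^3e^1-a^1e^3)f^{31}\big]$ with $e^i=e^i_+$. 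Since the $f^{ij}$ are independent, the three horizontal $2$-form identities decouple.

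The decisive step is then the curvature decomposition $\rho^i_+=\sum_j\tilde{a}^i_je^j_++\tilde{\tilde{b}}^i_je^j_-$ from \eqref{componentsofcurvaturetensor}. The three right-hand sides involve only the self-dual forms $e^j_+$, whereas the left-hand sides carry an anti-self-dual part through $\tilde{\tilde{b}}$. Projecting each identity onto the anti-self-dual summand $\Lambda^2_-$ and using $\lambda\neq0$ yields, for every fibre point $a$ and every $j$, the relations $a^2\tilde{\tilde{b}}^1_j-a^1\tilde{\tilde{b}}^2_j=a^3\tilde{\tilde{b}}^2_j-a^2\tilde{\tilde{b}}^3_j=a^1\tilde{\tilde{b}}^3_j-a^3\tilde{\tilde{b}}^1_j=0$, i.e. $a\times(\tilde{\tilde{b}}^1_j,\tilde{\tilde{b}}^2_j,\tilde{\tilde{b}}^3_j)=0$. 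Because these hold for all $a\in\R^3$ while $\tilde{\tilde{b}}$ depends only on the base point, each such vector vanishes, so $\tilde{\tilde{b}}\equiv0$. As $-\tilde{\tilde{b}}$ is the matrix of $B^*$, this is $B=0$, which by the discussion after \eqref{curvaturematrix} is exactly the Einstein condition; for $X_-$ the same projection kills $\tilde{b}=-\tilde{\tilde{b}}$ and gives the same conclusion.

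I expect the main obstacle to be the careful bookkeeping in the bidegree expansion rather than any conceptual difficulty: one must verify cleanly that the $\beta$-term really drops, that the horizontal factors separate into self-dual and anti-self-dual pieces, and that the $f^{ij}$ genuinely decouple the identity. Once the identity is organised on the vertical $2$-forms $f^{ij}$, letting $a$ range over the fibre forces the anti-self-dual block, hence $B$, to vanish.
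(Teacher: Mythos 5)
Your proof is correct and follows essentially the same route as the paper's: both feed $\dx\phi=0$ into the first line of \eqref{structureG2BryantSalamon} and conclude that, since every remaining term involves only self-dual (resp.\ anti-self-dual) horizontal $2$-forms, closedness forces the off-diagonal curvature block $B$ to vanish, i.e.\ $M$ is Einstein. The only difference is one of detail: you make explicit the bidegree decomposition on the $f^{ij}$ and the cross-product argument $a\times(\tilde{\tilde{b}}^{1}_j,\tilde{\tilde{b}}^{2}_j,\tilde{\tilde{b}}^{3}_j)=0$ for all $a$, which the paper's terse proof leaves implicit.
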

\begin{proof}
The assumption on a function $\zeta$ on $X_\pm$ of being dependent only of $r$ and \eqref{derivadaexterior_r} imply that $\dx\zeta=2\papa{\zeta}{r}fa^t$. The first line of \eqref{structureG2BryantSalamon} thus becomes $\dx\phi=\lambda^3h\rho a^t\mp2\papa{(\lambda\mu^2)}{r}fa^t\eta f^t$. It is now enough to see the case of self-duality, hence with $\rho=\rho_+$. Recall we have seen the Einstein condition is fulfilled with $\rho_+$ having no anti-self-dual terms, i.e. the vanishing of the $\tilde{\tilde{b}}^i_j$ terms in \eqref{componentsofcurvaturetensor}. If $\phi$ is closed, then indeed we must have $B=0$.
\end{proof}
In the following we find the torsion forms introduced in \eqref{classificationequationsofG2structures}.
\begin{teo}\label{torsionformsofX_pm}
Consider the spaces $X_\pm=\Lambda_\pm^2T^*M$ with the generic Bryant-Salamon $\gdois$ structure $\phi$ and assume $\lambda$ and $\mu$ are only dependent of $r$. Assume also that $M$ is anti-self-dual in the case of $X_+$ or self-dual in the case of $X_-$. We thus have $\rho=\mp s\check{\eta}+\rho_{_B}$, as in equation \eqref{self-dualcurvatureequation}, where $\rho_{{}_B}$ is the Einstein component, which interchanges self- with anti-self-duality depending of which case. Then we have:
\begin{enumerate}
 \item[i)] $\tau_0=0$
 \item[ii)] $\tau_1=\frac{2}{3\lambda^2\mu^4}
            \bigl(\papa{(\lambda^2\mu^4)}{r}-s\lambda^4\mu^2\bigr)\dx r$
 \item[iii)] $\tau_2=
    \mp\bigl(\papa{}{r}(\frac{\mu^2}{\lambda^2})-2s\bigr)
     \bigl(\frac{4\lambda^3}{3\mu^2}ha^t\pm\frac{2\lambda}{3}\eta a^t\bigr)$
 \item[iv)] $\tau_3=\mp\lambda^2f\rho_{{}_B}a^t$\,\ and, in particular, $\tau_3=0$ if and only if $M$ is Einstein.
\end{enumerate}
\end{teo}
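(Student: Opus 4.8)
The plan is to substitute the curvature hypothesis $\rho=\mp s\check\eta+\rho_{{}_B}$ into the structure equations \eqref{structureG2BryantSalamon} and then match the outcome against the classification identities \eqref{classificationequationsofG2structures}, whose decomposition is unique. Since $\lambda,\mu$ depend only on $r$ and $\dx r=2fa^t$, every differentiated coefficient acquires the factor $fa^t$; e.g.\ $\dx\lambda^3=6\lambda^2\papa{\lambda}{r}fa^t$ and $\dx(\lambda\mu^2)=2\papa{(\lambda\mu^2)}{r}fa^t$. Substituting $\rho$ splits each curvature term into a \emph{scalar} part from $\mp s\check\eta$ and an \emph{Einstein} part from $\rho_{{}_B}$, which I treat separately. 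I shall use repeatedly that $fa^t\wedge\beta=0$ and that the horizontal self-dual forms obey $e^i\wedge e^j=2\delta^{ij}\vol$.

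Turning first to $\dx\phi$: the $\dx\lambda^3\,\beta$ term vanishes, and writing $\eta f^t=\mp\frac1{\lambda\mu^2}(\phi-\lambda^3\beta)$ shows the derivative term equals $\frac{2}{\lambda\mu^2}\papa{(\lambda\mu^2)}{r}\,fa^t\wedge\phi$. For the scalar term I verify the elementary identity $h\check\eta a^t=-fa^t\wedge\eta f^t=\pm\frac1{\lambda\mu^2}fa^t\wedge\phi$, whence $\mp s\lambda^3 h\check\eta a^t=-s\frac{\lambda^2}{\mu^2}fa^t\wedge\phi$. Collecting, $\dx\phi=\bigl(\frac{2}{\lambda\mu^2}\papa{(\lambda\mu^2)}{r}-s\frac{\lambda^2}{\mu^2}\bigr)fa^t\wedge\phi+\lambda^3 h\rho_{{}_B}a^t$. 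The first summand lies in the image of $\Lambda^1\to\Lambda^4_7$, $\alpha\mapsto\alpha\wedge\phi$, and comparison with $\frac34\tau_1\wedge\phi$ (using $fa^t=\tfrac12\dx r$) yields exactly the $\tau_1$ of (ii). Since the two summands will be shown to lie in $\Lambda^4_7$ and $\Lambda^4_{27}$ respectively, leaving no $\R\psi=\Lambda^4_1$ component, uniqueness of \eqref{classificationequationsofG2structures} forces $\tau_0=0$, proving (i).

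To identify the residual term with $*_\phi\tau_3$ for $\tau_3=\mp\lambda^2 f\rho_{{}_B}a^t$ I must check two points. First, $\tau_3\in W_3$, i.e.\ $\tau_3\wedge\phi=\tau_3\wedge\psi=0$: both follow from the orthogonality $\Lambda^2_+\wedge\Lambda^2_-=0$, since the horizontal factors of $\tau_3$ sit in $\rho_{{}_B}$ while those of $\phi,\psi$ involve the complementary duality class $e^i$, so every cross term $e^i\wedge(\rho_{{}_B})^k_l$ vanishes. Second, the Hodge identity $*_\phi(f\rho_{{}_B}a^t)=\mp\lambda\,h\rho_{{}_B}a^t$: for the product metric $\mathrm{g}_\phi=\lambda^2\mathrm{g}_V+\mu^2\mathrm{g}_H$ the star on a form of vertical degree $p$ and horizontal degree $q$ scales by $\lambda^{3-2p}\mu^{4-2q}$, so on the mixed $(1,2)$-form $f^i(\rho_{{}_B})^i_j$ one reads the factor $\lambda$ from $*_V f^i=h^i$ and the sign $\mp1$ from $*_H(\rho_{{}_B})^i_j=\mp(\rho_{{}_B})^i_j$ — and it is precisely here that the (anti-)self-duality of $M$ is used. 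Hence $*_\phi\tau_3=\lambda^3 h\rho_{{}_B}a^t$, completing the $\dx\phi$ equation. As the $f^i$ are independent and $a$ ranges over the fibre, $\tau_3\equiv0$ is equivalent to $\rho_{{}_B}=0$, i.e.\ $B=0$, which by the earlier discussion means $M$ is Einstein; this gives (iv).

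Finally, for $\dx\psi$ and $\tau_2$ I note that the Einstein contribution $\eta\check f\rho_{{}_B}a^t$ vanishes, again by $e^i\wedge(\rho_{{}_B})^k_l=0$, so $\tau_2$ carries no $\rho_{{}_B}$, matching the shape of (iii). The scalar part rests on the identity $\eta\check f\check\eta a^t=4\,fa^t\wedge\vol$, checked via $e^i\wedge e^j=2\delta^{ij}\vol$. Thus $\dx\psi$ reduces to a combination of $fa^t\wedge\vol$ and $fa^t\wedge\eta h^t$. Expanding the right-hand side $\tau_1\wedge\psi+\tau_2\wedge\phi$ through $\psi=\mu^4\vol-\lambda^2\mu^2\eta h^t$ and the auxiliary relations $ha^t\wedge\eta f^t=\beta\wedge\eta a^t=fa^t\wedge\eta h^t$ and $\eta a^t\wedge\eta f^t=2\,\vol\wedge fa^t$ produces two scalar equations, in the coefficients of $fa^t\wedge\eta h^t$ and of $fa^t\wedge\vol$. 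The first determines $\tau_2$ and, after simplification, reproduces (iii) with the factor $\papa{}{r}(\mu^2/\lambda^2)-2s$; the second is a consistency check confirming that the $\tau_1$ found here coincides with the one from $\dx\phi$. The main obstacle I anticipate is the Hodge-star identity for $\tau_3$: extracting the correct power of $\lambda$ and, above all, the sign $\mp$ from $*_H\rho_{{}_B}=\mp\rho_{{}_B}$, and then carrying the correlated $\mp,\pm$ signs through so that both $\dx\psi$-equations close.
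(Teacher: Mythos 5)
Your strategy is genuinely different from the paper's and, in structure, it works. The paper \emph{extracts} each torsion form by applying explicit formulas to $\dx\phi$ and $\dx\psi$: it computes $7\tau_0\Vol_{\mathrm{g}_\phi}=(\dx\phi)\wedge\phi$, then $\tau_1=\frac13*_\phi\bigl((*_\phi\dx\psi)\wedge\psi\bigr)$, then $\mp*_\phi\tau_2=\dx\psi-\tau_1\wedge\psi$ and $\tau_3=*_\phi\bigl(\dx\phi-\frac34\tau_1\wedge\phi\bigr)$, each step needing Hodge-star computations on mixed forms. You instead exhibit $\dx\phi$ as a $1$-form wedge $\phi$ plus a remainder which you identify as $*_\phi\tau_3$ with $\tau_3\in W_3$, and invoke uniqueness of the decomposition $\Lambda^4=\Lambda^4_1\oplus\Lambda^4_7\oplus\Lambda^4_{27}$ to read off $\tau_0=0$, $\tau_1$ and $\tau_3$ simultaneously; this is legitimate (injectivity of $\wedge\phi$ on $1$-forms, and on $2$-forms for the $\dx\psi$ equation, is what makes the coefficient-matching rigorous) and it buys you parts i), ii), iv) with a single Hodge-star computation, the one on $f\rho_{{}_B}a^t$, which you carry out correctly — including the decisive sign $*_H\rho_{{}_B}=\mp\rho_{{}_B}$, which is exactly where (anti-)self-duality of $M$ enters. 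Your identity $h\check\eta a^t=-fa^t\wedge\eta f^t$ and the $W_3$-membership check via $e^i\wedge(\rho_{{}_B})^k_l=0$ are also correct; the latter is the same orthogonality the paper uses as $\rho_{{}_B}\eta=0$.

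The flaw sits exactly where you anticipated trouble, and it is concrete: the wedge-square of the tautological forms is case-dependent. In the paper's notation $\eta^t\eta=\pm2\vol\cdot1_3$, i.e.\ $e^i\wedge e^j=2\delta^{ij}\vol$ on $X_+$ but $e^i\wedge e^j=-2\delta^{ij}\vol$ on $X_-$, since there the $e^i$ are anti-self-dual. Hence your two auxiliary identities for part iii) must read $\eta a^t\wedge\eta f^t=\pm2\,fa^t\wedge\vol$ and $\eta\check f\check\eta a^t=\pm4\,fa^t\wedge\vol$ (the paper records the latter as $\eta\check f\check\eta=-f\check\eta^2=\pm4f\vol$). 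As you state them, without the sign, the $X_-$ computation fails: the curvature term of $\dx\psi$ would come out as $\mp4s\lambda^2\mu^2\,fa^t\wedge\vol$, whereas the correct value is the case-independent $-4s\lambda^2\mu^2\,fa^t\wedge\vol$, because the $\mp$ of the hypothesis $\rho=\mp s\check\eta+\rho_{{}_B}$ cancels against the $\pm$ of the identity; likewise the $\eta a^t\wedge\eta f^t$ term inside $\tau_2\wedge\phi$ flips sign. With the unsigned identities the two scalar equations do not close for the lower sign and would wrongly suggest the stated $\tau_1,\tau_2$ are inconsistent on $X_-$; with the signed identities both equations close in both cases (I checked: the $fa^t\wedge\eta h^t$ coefficient gives $-T=-K\lambda^2\mu^2+\frac{2\lambda^4}{3}N$ and the $fa^t\wedge\vol$ coefficient gives $U-4s\lambda^2\mu^2=K\mu^4+\frac{4\lambda^2\mu^2}{3}N$, where $\tau_1=Kfa^t$ and $N=\papa{}{r}(\mu^2/\lambda^2)-2s$, and both are identities). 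So the repair is local and mechanical, but as written your argument proves part iii) only for $X_+$.
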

\begin{proof}
i) Since the wedge of 4-forms with $\phi$ is equivariant, we find an invariant kernel of such map and then deduce $7\tau_0\Vol_{\mathrm{g}_\phi}=(\dx\phi)\phi$. Suppose by hypothesis that $\dx(\lambda\mu^2)=Sfa^t=\frac{1}{2}S\dx r$. Finally,
\begin{equation*}
 \begin{split}
   (\dx\phi)\phi\ & =\ (\lambda^3h\rho a^t\pm S\eta f^tfa^t)(\lambda^3\beta\mp\lambda\mu^2\eta f^t)\\
  & =\ s\lambda^4\mu^2h\check{\eta}a^t\eta f^t-S\lambda\mu^2\eta f^tfa^t\eta f^t\\
  & =\ 0
 \end{split}
\end{equation*}
because $\rho_{_B}\eta=0$,\ \,$\eta f^th\check{\eta}=\beta\eta\check{\eta}=0$,\ \, $\eta\eta^t=\pm6\vol$,\ \,$\eta^t\eta=\pm2\vol.1_3$\,\ and then\,\ $f\eta^t\eta f^tf=\pm2\vol f f^tf=0$. \\
ii) As above, we define three functions $S,T,U$ simply by $\dx(\lambda\mu^2)=Sfa^t$,  $\dx(\lambda^2\mu^2)=Tfa^t$ and $\dx(\mu^4)=Ufa^t$. Note also the identity $f\check{\eta}+\eta\check{f}=0$, which is easy to check and implies $\eta\check{f}\check{\eta}=-f\check{\eta}^2=\pm4f\vol$. Below we will also need $f^tf=-\check{h}$. We have then, by \eqref{structureG2BryantSalamon},
\begin{equation*}
 \begin{split}
   *_\phi\dx\psi\ & =\ *_\phi\bigl((U\vol-T\eta h^t)fa^t \mp\lambda^2\mu^2\eta\check{f}s\check{\eta}a^t\bigr)\\
   & = \ *_\phi\bigl((U-4s\lambda^2\mu^2)\vol fa^t-T\beta\eta a^t\bigr) \\
   & = \ \frac{\lambda}{\mu^4}(U-4s\lambda^2\mu^2)ha^t\mp\frac{1}{\lambda^3} T\eta a^t \ .
 \end{split}
\end{equation*}
Now, it is known that $\tau_1=\frac{1}{3}*_\phi\bigl((*_\phi\dx\psi)\psi\bigr)$\, (cf. \cite{FerGray,FriIva2}). Hence, since $hh^t=0$ and $\eta h^t=h\eta^t$ is a 4-form,
\begin{equation*}
 \begin{split}
 \tau_1 \ & =\ \frac{1}{3}*_\phi\biggl(\bigl(\frac{\lambda}{\mu^4}(U-4s\lambda^2\mu^2)ha^t\mp\frac{T}{\lambda^3}\eta a^t\bigr)
           \bigl(\mu^4\vol-\lambda^2\mu^2\eta h^t\bigr)\biggr) \\
   & =\ \frac{1}{3}*_\phi\bigl(\lambda(U-4s\lambda^2\mu^2)h\vol a^t\pm\frac{T\mu^2}{\lambda}h\eta^t\eta a^t\bigr) \\
   & = \ \frac{1}{3\lambda}*_\phi(\lambda^2U-4s\lambda^4\mu^2+2T\mu^2)h\vol a^t \\
   & =\ \frac{\lambda}{3\lambda^3\mu^4}(\lambda^2U-4s\lambda^4\mu^2+2\mu^2T)fa^t \ .
 \end{split}
\end{equation*}
Since $(\lambda^2U+2\mu^2T)fa^t=\lambda^2\dx(\mu^4)+2\mu^2\dx(\lambda^2\mu^2)=2\dx(\lambda^2\mu^4)$, the result follows.\\
iii) The easiest way to find $\tau_2$, lying in the $\g_2$ representation module, seems to be by using the formula we have just proved. Recalling \eqref{classificationequationsofG2structures} and the previous formula for $\dx\psi$ and checking $h^tf=\beta.1_3$, we have
\begin{equation*}
 \begin{split}
  \mp*_\phi\tau_2\ &=\ \dx\psi-\tau_1\psi\\
    &=\ (U-4s\lambda^2\mu^2)\vol fa^t-T\beta\eta a^t 
  -\frac{1}{3\lambda^2}(\lambda^2U-4s\lambda^4\mu^2+2\mu^2T)fa^t\vol+\\
   &\hspace{3cm}+\frac{1}{3\mu^2}(\lambda^2U-4s\lambda^4\mu^2+2\mu^2T)fa^t\eta h^t\\
  &=\ \frac{1}{3\lambda^2}\bigl(3\lambda^2U-12s\lambda^4\mu^2
  -\lambda^2U+4s\lambda^4\mu^2-2\mu^2T\bigr)\vol fa^t+\\  
   &\hspace{3cm}-T\beta\eta a^t+\frac{1}{3\mu^2}(\lambda^2U-4s\lambda^4\mu^2+2\mu^2T)\eta h^tfa^t\\
   &=\ \frac{1}{3\lambda^2}\bigl(2\lambda^2U-8s\lambda^4\mu^2-2\mu^2T\bigr)\vol fa^t+\frac{1}{3\mu^2}(\lambda^2U-4s\lambda^4\mu^2-\mu^2T)\eta\beta a^t\\
  &=\ (\lambda^2U-\mu^2T-4s\lambda^4\mu^2)(\frac{1}{3\mu^2}\eta\beta+\frac{2}{3\lambda^2}\vol f)a^t\ .
 \end{split}
\end{equation*}
Hence
\begin{equation*}
 \begin{split}
  \tau_2\ &=\ \mp(\lambda^2U-\mu^2T-4s\lambda^4\mu^2)\bigl(\pm\frac{1}{3\mu^2\lambda^3}\eta
  +\frac{2}{3\lambda\mu^4}h\bigr)a^t\\
  &=\ \mp(\frac{\lambda^2}{\mu^2}U-T-4s\lambda^4)\bigl(\pm\frac{1}{3\lambda^3}\eta
  +\frac{2}{3\lambda\mu^2}h\bigr)a^t\\
  &=\ \mp(2\frac{\lambda^2}{\mu^2}\papa{\mu^4}{r}-2\papa{\lambda^2\mu^2}{r}-4s\lambda^4)\bigl( \frac{2}{3\lambda\mu^2}h\pm\frac{1}{3\lambda^3}\eta\bigr)a^t\\
  &= \ \mp\bigl(\papa{}{r}(\frac{\mu^2}{\lambda^2})-2s\bigr)\bigl(\frac{4\lambda^3}{3\mu^2}ha^t
  \pm\frac{2}{3}\lambda\eta a^t\bigr) \ .
 \end{split}
\end{equation*}
iv) Finally, from the formulae above for $\dx\phi$ and $\tau_1$, we find
\begin{equation*}
 \begin{split}
  \tau_3 \ & =\ *_\phi\bigl(\dx\phi+\frac{3}{4}\phi\tau_1\bigr)\\
   & =\ *_\phi\Bigl(\lambda^3h\rho a^t\pm S\eta f^tfa^t+\frac{1}{4\lambda^2\mu^4}(\mp\lambda\mu^2\eta f^t)(\lambda^2U-4s\lambda^4\mu^2+2\mu^2T)fa^t\Bigr)\\
   &=\ \frac{1}{4\lambda\mu^2}*_\phi\bigl(4\lambda^4\mu^2h\rho\mp4S\lambda\mu^2\eta\check{h} \pm(\lambda^2U-4s\lambda^4\mu^2+2\mu^2T)\eta\check{h}\bigr)a^t\\
   &=\ \frac{1}{4\lambda\mu^2}\bigl(4\lambda^4\mu^2*_\phi(\mp sh\check{\eta}+h\rho_{_B})-4S\mu^2\eta\check{f} +\frac{1}{\lambda}(\lambda^2U-4s\lambda^4\mu^2+2\mu^2T)\eta\check{f}\bigr)a^t\\
   &=\ \frac{1}{4\lambda\mu^2}\bigl(-4s\lambda^3\mu^2(f\check{\eta}+ \eta\check{f})\mp4\lambda^3\mu^2f\rho_{_B} +\frac{1}{\lambda}(\lambda^2U+2\mu^2T-4\lambda\mu^2S)\eta\check{f}\bigr)a^t\\
   &=\ \mp\lambda^2f\rho_{_B}a^t\ .
 \end{split}
\end{equation*}
Indeed, $f\check{\eta}+\eta\check{f}=0$ and 
\begin{equation*}
 (\lambda^2U+2\mu^2T-4\lambda\mu^2S)fa^t=2\dx(\lambda^2\mu^4)-4\lambda\mu^2\dx(\lambda\mu^2)=0  \ .
\end{equation*}
So the formula is much simplified.
\end{proof}
We remark that $ha^t$ is also a global 2-form, just as the 2-form $\eta a^t$.

\subsection{New examples of $\gdois$ manifolds}
\label{sec2:subsection2}

With the above theorem we can construct new examples of $\gdois$ structures of eight different and unusual classes. Regarding pure $W_i$, $i=1,2,3$, and other relevant types, we have further observations.

One writes, in general,
\begin{equation}
\tau_1=\frac{2}{3}\bigl(\dx\log(\lambda^2\mu^4)-s\frac{\lambda^2}{\mu^2}\dx r\bigr)\ .
\end{equation}
In the conditions of Theorem \ref{torsionformsofX_pm}, we can indeed find some examples of non-trivial pure type $W_1$ structures, i.e. locally conformally parallel. However, if $12s=\Scal_M<0$, then the structure is only locally conformally parallel, not globally, and in general the induced metric $\mathrm{g}_\phi$ is not complete nor defined on the whole space. Note that $s$ is constant since $\tau_3=0$. Indeed $\tau_2=0$ has a solution: $\lambda=$constant and $\mu^2=\lambda^2(2sr+c_1)$, where $c_1$ is another constant.

Regarding pure type $W_2$ structures, the equation $\tau_1=0$ does not yield so easily. Taking $\lambda$ constant, leads to a complete solution if and only if $\Scal_M\geq0$, giving an answer to the problem. Taking $\mu$ a constant, leads to another solution but hardly with the metric $\mathrm{g}_\phi$ complete. 

We notice that $\tau_1$ and $\tau_2$ are closely related, by the following simple lemma which is just calculus in the variable $r$.
\begin{lemma}\label{lemadas3funcoes}
With $\lambda,\mu>0$, any two of the following conditions imply the third:
 \begin{equation}\label{eqlemadas3funcoes}
  \lambda\mu=c_0\ \mbox{a constant}\ ,\qquad\qquad\tau_1=0\ ,\qquad\qquad\tau_2=0 \ .
 \end{equation}
\end{lemma}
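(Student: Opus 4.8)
The plan is to translate all three conditions into ordinary differential equations in the single variable $r$ and then observe that the resulting scalar relations are linearly dependent, so that one universal identity yields all three implications at once. Throughout I treat $s$ as a parameter that is constant along the fibre (it is pulled back from $M$, hence independent of $r$), and I abbreviate $P=\lambda^2$ and $Q=\mu^2$, so that $P,Q>0$ are functions of $r$ with $r$-derivatives $P',Q'$.

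First I would record the scalar form of each condition. Since $\lambda,\mu>0$, the condition $\lambda\mu=c_0$ is equivalent to $(PQ)'=0$, i.e.\ $P'Q+PQ'=0$. Using the expression for $\tau_1$ from Theorem \ref{torsionformsofX_pm}, and noting that $\dx r\neq0$ away from the zero section while the prefactor $\frac{2}{3\lambda^2\mu^4}$ never vanishes, the condition $\tau_1=0$ is equivalent to $\papa{(\lambda^2\mu^4)}{r}=s\lambda^4\mu^2$; expanding $(PQ^2)'=P'Q^2+2PQQ'$ and cancelling the positive factor $Q$ turns this into $P'Q+2PQ'=sP^2$. Likewise, since the fixed $2$-form factor $\frac{4\lambda^3}{3\mu^2}ha^t\pm\frac{2\lambda}{3}\eta a^t$ is generically nonzero, $\tau_2=0$ is equivalent to $\papa{}{r}(\mu^2/\lambda^2)=2s$, which after clearing the positive factor $P^2$ reads $PQ'-P'Q=2sP^2$.

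The key step is then purely linear. Writing the three defects as
\[
 [\mathrm{A}]=P'Q+PQ',\qquad [\mathrm{B}]=P'Q+2PQ'-sP^2,\qquad [\mathrm{C}]=PQ'-P'Q-2sP^2,
\]
a one-line computation gives the identity $3[\mathrm{A}]-2[\mathrm{B}]+[\mathrm{C}]=0$, valid for all $P,Q,s$. Since the coefficients $3,-2,1$ are all nonzero, the vanishing of any two of $[\mathrm{A}],[\mathrm{B}],[\mathrm{C}]$ forces the third to vanish; because $[\mathrm{A}]=0$, $[\mathrm{B}]=0$, $[\mathrm{C}]=0$ are exactly the three conditions of \eqref{eqlemadas3funcoes}, this proves the lemma.

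I do not expect a genuine obstacle here: once the conditions are in scalar ODE form the conclusion is immediate linear algebra. The only points needing care are the two equivalences in the second paragraph — that $\tau_1=0$ and $\tau_2=0$ \emph{as differential forms} are equivalent to the vanishing of their scalar coefficients — which rest on the non-vanishing of $\dx r$ and of the $2$-form multiplying the $\tau_2$ coefficient, together with the positivity of $\lambda,\mu$ used to divide by $P$ and $Q$. If one prefers to avoid the unified identity, the same content can be organized as three separate substitutions (for instance, under $[\mathrm{A}]=0$ both $[\mathrm{B}]=0$ and $[\mathrm{C}]=0$ collapse to $Q'=sP$, while $[\mathrm{B}]=[\mathrm{C}]=0$ recovers $(PQ)'=0$), but the single linear identity makes all three cases transparent simultaneously.
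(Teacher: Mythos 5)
Your proof is correct and follows exactly the route the paper intends: the paper gives no written argument beyond remarking that the lemma ``is just calculus in the variable $r$'', and your translation of the three conditions into the scalar relations $[\mathrm{A}]=0$, $[\mathrm{B}]=0$, $[\mathrm{C}]=0$ (using positivity of $\lambda,\mu$ and non-vanishing of $\dx r$ and of the $2$-form factor to pass from forms to coefficients) is precisely that calculus. The identity $3[\mathrm{A}]-2[\mathrm{B}]+[\mathrm{C}]=0$, which checks out, is just a tidy packaging of the same computation, consistent with the paper's subsequent reduction of \eqref{eqlemadas3funcoes} to the system $\lambda\mu=c_0$, $\partial_r\mu^2-s\lambda^2=0$.
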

In order to achieve pure type $W_3$ or even $\gdois$-holonomy, one thus assumes \eqref{eqlemadas3funcoes}; equivalently, one assumes the system of equations $\lambda\mu=c_0$ and $\partial_r\mu^2-s\lambda^2=0$. The unique solution is (with $c_1$ another constant):
\begin{equation}\label{coeficientessolucao}
 (\mu(r))^2=(2c_0^2sr+c_1)^\frac{1}{2}\ ,\quad\qquad
 (\lambda(r))^2=c_0^2(2c_0^2sr+c_1)^{-\frac{1}{2}}\ .
\end{equation}
The only existing compact self-dual Einstein 4-manifolds with $s>0$, result due to Hitchin, were pointed out in the original construction of what we have denoted by $X_-$. The following is well-known.
\begin{teo}[Bryant-Salamon, \cite{BrySal,Sal3}]\label{BryantSalamontheorem}
For $M=S^4$ or $M=\C\Proj^2$ with standard metrics, the spaces $\Lambda_-^2T^*M$ have a complete metric with holonomy $\gdois$. 
\end{teo}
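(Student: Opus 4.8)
The plan is to obtain the stated holonomy by forcing all four torsion forms of \eqref{classificationequationsofG2structures} to vanish, using Theorem \ref{torsionformsofX_pm} and Lemma \ref{lemadas3funcoes}, and then to verify separately that the resulting metric $\mathrm{g}_\phi$ is complete and that its holonomy is exactly $\gdois$ rather than a proper subgroup. First I would fix the hypotheses on the base: for $X_-=\Lambda_-^2T^*M$ to fall under Theorem \ref{torsionformsofX_pm} the manifold $M$ must be self-dual, and in order that $\tau_3=\mp\lambda^2 f\rho_{{}_B}a^t$ vanish it must in addition be Einstein. I therefore restrict to compact self-dual Einstein 4-manifolds with $s=\frac{1}{12}\Scal_M>0$; by the theorem of Hitchin quoted just above, these are exactly $S^4$ and $\C\Proj^2$ with their standard metrics (the Fubini--Study metric being self-dual for the appropriate orientation), and on such $M$ the number $s$ is a positive constant.

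Next I would determine the radial profiles $\lambda,\mu$. Imposing $\lambda\mu=c_0$ together with $\tau_2=0$ --- which by Theorem \ref{torsionformsofX_pm}(iii) and the constraint $\lambda^2\mu^2=c_0^2$ reduces to the single ODE $\partial_r\mu^2-s\lambda^2=0$ --- automatically yields $\tau_1=0$ as well, by Lemma \ref{lemadas3funcoes}. Since $\tau_0=0$ holds identically and $\tau_3=0$ by the Einstein hypothesis, every component in \eqref{classificationequationsofG2structures} is zero, so $\dx\phi=\dx\psi=0$ and the structure is parallel, $\na\phi=0$; this already places the holonomy inside $\gdois$. The unique solution of the system $\lambda\mu=c_0$, $\partial_r\mu^2-s\lambda^2=0$ is \eqref{coeficientessolucao}, and because $s>0$ one may take $c_0\neq0$ and $c_1>0$ so that $2c_0^2 s r+c_1>0$, keeping $\lambda^2,\mu^2$ smooth and strictly positive for all $r\ge 0$.

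The remaining and hardest point is completeness together with the exactness of the holonomy. For smoothness across the zero section I would observe that $\lambda^2$ and $\mu^2$ in \eqref{coeficientessolucao} are smooth and positive at $r=0$ with finite $r$-derivatives, so near the zero section the vertical metric $\lambda^2\mathrm{g}_V$ is a smooth positive multiple of the flat $\R^3$ fibre metric and the total space is a genuine smooth Riemannian manifold. For completeness at infinity I would estimate the length of a radial ray from $\dx r=2fa^t$ together with the vertical rescaling by $\lambda$: as $r\to\infty$ one has $\lambda\sim r^{-1/4}$, whence the radial arclength $\int\lambda\,\dx r/\sqrt{r}$ behaves like $\int r^{-3/4}\,\dx r$, which diverges, so every divergent curve has infinite length and $\mathrm{g}_\phi$ is complete.

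Finally, to upgrade the inclusion into $\gdois$ to an equality I would argue that $\mathrm{g}_\phi$ is neither flat nor locally a Riemannian product, for instance because $M$ carries nonzero self-dual Weyl curvature $W_+$ (equivalently, a nontrivial curvature block $A$ enters the first line of \eqref{structureG2BryantSalamon}), so the holonomy cannot reduce to a proper subgroup of $\gdois$. I expect the torsion-vanishing and the completeness estimate to be essentially routine once the profiles \eqref{coeficientessolucao} are in hand; the genuine difficulty, and the real content of Bryant--Salamon beyond the torsion computation, is precisely this last irreducibility argument establishing that the holonomy is the \emph{full} group $\gdois$.
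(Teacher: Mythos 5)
Your first two steps reproduce faithfully what the paper's own machinery gives: Theorem \ref{torsionformsofX_pm} with $M$ self-dual Einstein (so $\tau_0=0$ and $\tau_3=0$), Lemma \ref{lemadas3funcoes} forcing $\tau_1=\tau_2=0$ once $\lambda\mu=c_0$ and $\partial_r\mu^2-s\lambda^2=0$, the explicit profiles \eqref{coeficientessolucao}, and Hitchin's classification pinning the compact $s>0$ cases down to $S^4$ and $\C\Proj^2$. Be aware, however, that the paper does not itself prove Theorem \ref{BryantSalamontheorem}; it records it as known, citing \cite{BrySal,Sal3}, and only the torsion-vanishing (i.e.\ $\mathrm{Hol}\subseteq\gdois$) part is recoverable from its own results. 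Your completeness sketch is essentially sound, provided you make explicit that a divergent curve must have $r\to\infty$ (compactness of $M$ plus $\mu^2\geq\sqrt{c_1}>0$) and that its length dominates $\int\lambda(r)\,|\dx r|/(2\sqrt{2r})\sim\int r^{-3/4}\,\dx r$, which diverges.

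The genuine gap is your last step, where holonomy \emph{contained in} $\gdois$ must be upgraded to holonomy \emph{equal to} $\gdois$. The claim that $\mathrm{g}_\phi$ is ``neither flat nor locally a Riemannian product \ldots because $M$ carries nonzero $W_+$'' is a \emph{non sequitur}: nonvanishing base curvature rules out flatness of the total space, but it does not rule out a parallel distribution, i.e.\ holonomy $\SU(2)$ (a local splitting $\R^3\times{}$hyperk\"ahler $4$-manifold) or $\SU(3)$ (a local splitting $\R\times{}$Calabi--Yau $3$-fold); both of those reductions are perfectly compatible with abundant curvature, so nothing you have written excludes them. The argument that actually closes this --- recounted in the paper's Remark following the theorem on $D_{r_0,\pm}\calH^4$ and $D_{r_0,-}\calH^2_\C$ --- is Bryant--Salamon's criterion: for a complete, simply-connected, parallel $\gdois$ manifold it suffices that there exist no non-trivial parallel $1$-forms, since every proper holonomy reduction produces one. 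One then kills the space $\calP$ of parallel $1$-forms by (i) topology, $\Lambda^2_-T^*M$ retracts onto the compact base and so is not diffeomorphic to $\R^7$, disposing of the flat case, and (ii) representation theory: the isometry group $G=\SO(5)$, respectively $\SU(3)$, acts on the finite-dimensional space $\calP$, whose irreducible components would have to have dimension $0$, $3$ or $4$, which is impossible for these groups unless $\calP=0$. The paper even stresses that this is precisely the step that breaks down for the hyperbolic mirrors (where $\SO(4,1)$ does have a $4$-dimensional representation), so it cannot be waved through as routine; without it your proof establishes only $\mathrm{Hol}\subseteq\gdois$.
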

We recall that self-dual (SD) scalar-flat 4-manifolds also give rise to interesting complete $\gdois$ structures on $X_-$ by the same method. Raising questions similar to the above for the $\gdois$ structure on anti-self-dual (ASD) metrics, thus pretending that orientation would precede other requirements, we proceed with the study on $X_+$.

Let us resume with the $\Scal_M=0$ condition. The spin compact scalar-flat K\"ahler surfaces were classified in \cite[Proposition 3]{LeB1} and consist of the Calabi-Yau surfaces, the flat torus modulo a finite group, here denoted $M_0$, and the $\C\Proj^1$-bundles over a Riemann surface of genus $>1$ with the local product metric, here $M_1$.
\begin{teo}
\begin{enumerate}
\item[i)] Let $M$ be any complete scalar-flat K\"ahler surface, with the compatible orientation. Then the associated $\gdois$ structure $\phi$ on the manifold $X_+$ is cocalibrated, i.e. $\dx\psi=0$, if and only if $\lambda,\mu$ are constant. In this case, $\phi$ is of pure type $W_3$ and $\mathrm{g}_\phi$ is complete.
\item[ii)] The three classes of manifolds $\Lambda_+^2T^*K3$, where $K3$ denotes any of the homonymous surfaces, $\Lambda_\pm^2T^*M_0$, all admit complete parallel $\gdois$ structures.
\item[iii)] $\Lambda_+^2T^*M_1$ is of pure type $W_3$ and not parallel.
\item[iv)] Both classes of manifolds $M_{2,k}=k\overline{\C\Proj}^2$, with $k\geq6$ (a $k$-many connected sum of conjugate-oriented $\C\Proj^2$s) and manifolds $M_{3,k}=\C\Proj^2\#k\overline{\C\Proj}^2$, with $k\geq14$, all with the scalar-flat ASD metrics des\-crib\-ed in \cite[Theorem A]{LeB3}, admit complete $\gdois$ structures on $\Lambda_+^2T^*M_{i,k}$ $(i=2,3)$ which are of pure type $W_3$ and not parallel.
\end{enumerate}
\end{teo}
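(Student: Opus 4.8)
The plan is to deduce all four parts from a single application of Theorem~\ref{torsionformsofX_pm} with the scalar curvature set to $s=0$, the only further distinction being whether or not $M$ is Einstein. The one geometric fact to install first is that a scalar-flat K\"ahler surface, taken with its complex orientation, is anti-self-dual: for a K\"ahler surface the endomorphism $W_+$ of $\Lambda^2_+$ has eigenvalues $\frac{\Scal}{6},-\frac{\Scal}{12},-\frac{\Scal}{12}$, so $\Scal_M=0$ forces $W_+=0$. Hence $M$ is ASD and $s=\frac{1}{12}\Scal_M=0$, which is precisely the hypothesis of Theorem~\ref{torsionformsofX_pm} for $X_+$. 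For the flat quotient $M_0$ the entire curvature vanishes, so $M_0$ is simultaneously SD, ASD, Einstein and scalar-flat (this is why both signs $X_\pm$ occur in (ii)); and for the LeBrun metrics of (iv) the ASD and scalar-flat properties come directly from the construction, with no K\"ahler detour.

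For part (i) I would start from $\dx\psi=\tau_1\wedge\psi+\tau_2\wedge\phi$ in \eqref{classificationequationsofG2structures}. By the Fern\'andez--Gray theory the two summands land in the distinct irreducible $\gdois$-submodules (of dimensions $7$ and $14=\dim\g_2$) of $\Lambda^5T^*X_+$, so $\dx\psi=0$ is equivalent to $\tau_1=0$ and $\tau_2=0$ separately. Setting $s=0$ in parts (ii)--(iii) of Theorem~\ref{torsionformsofX_pm}, the condition $\tau_1=0$ reads $\partial_r(\lambda^2\mu^4)=0$ and $\tau_2=0$ reads $\partial_r(\mu^2/\lambda^2)=0$ (the accompanying $2$-form $\frac{4\lambda^3}{3\mu^2}ha^t\pm\frac{2\lambda}{3}\eta a^t$ being nonzero off the zero section). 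Combining $\lambda^2\mu^4=\text{const}$ with $\mu^2/\lambda^2=\text{const}$ forces $\lambda,\mu$ constant, while conversely constant $\lambda,\mu$ annihilate both $\tau_1$ and $\tau_2$; this is the stated equivalence. With $\lambda,\mu$ constant and $s=0$ only $\tau_3=\mp\lambda^2 f\rho_{{}_B}a^t$ can survive, so $\phi$ is of pure type $W_3$. Finally $\mathrm{g}_\phi=\lambda^2\mathrm{g}_V+\mu^2\mathrm{g}_H$ is then, up to constant scaling, the natural connection metric of the Euclidean bundle $X_+\to M$; it is manifestly smooth across the zero section (no radial singularity) and complete whenever $M$ is, since it is a Riemannian submersion metric over $(M,\mu^2\mathrm{g})$ with totally geodesic, complete flat fibres.

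Parts (ii)--(iv) then turn on the single dichotomy that $\tau_3=\mp\lambda^2 f\rho_{{}_B}a^t$ vanishes if and only if $M$ is Einstein. A $K3$ surface with Calabi--Yau metric is Ricci-flat, hence Einstein with $s=0$ and $\rho_{{}_B}=0$; taking $\lambda,\mu$ constant then kills all four torsion forms, giving $\na\phi=0$, and the two free positive constants furnish the advertised $2$-parameter family. The same applies to the flat $M_0$ for both orientations. For $M_1$ and the connected sums $M_{2,k},M_{3,k}$ I would instead show that these are scalar-flat but \emph{not} Einstein, so $\rho_{{}_B}\neq0$ and hence $\tau_3\neq0$, yielding with constant $\lambda,\mu$ a complete pure type $W_3$, non-parallel structure on $X_+$. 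For $M_1$ non-Einsteinness follows from the local product structure, whose Ricci tensor is positive along the $\C\Proj^1$ fibre and negative along the base and so is not a multiple of $\mathrm{g}$; for $M_{i,k}$ it follows because the given ranges $k\geq6$ and $k\geq14$ are exactly what make the Hitchin--Thorpe inequality $2\chi\geq3|\sigma|$ fail (with $\chi$ the Euler characteristic and $\sigma$ the signature), so no Einstein metric can exist at all.

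The main obstacle, beyond correctly fixing the orientation so that $M$ is ASD and Theorem~\ref{torsionformsofX_pm} applies with $s=0$, is precisely the verification that $M_1$ and the LeBrun metrics are genuinely non-Einstein, for otherwise the pure-$W_3$ assertions would collapse to the parallel case; the Ricci-sign argument and the Hitchin--Thorpe count above are what secure this. A secondary, routine point is the completeness of $\mathrm{g}_\phi$ with constant coefficients, which reduces to completeness of $M$ as indicated.
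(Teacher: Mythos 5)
Your proposal is correct and its skeleton is the paper's own: reduce everything to Theorem \ref{torsionformsofX_pm} with $s=0$ (after checking the base is ASD), characterize cocalibration by $\tau_1=\tau_2=0$, and let the dichotomy $\tau_3=\mp\lambda^2 f\rho_{{}_B}a^t=0\Leftrightarrow M$ Einstein separate the parallel cases from the genuinely pure type $W_3$ ones. Two of your steps differ from the paper, both by replacing a citation with an argument. For ``scalar-flat K\"ahler $\Rightarrow$ ASD'' the paper invokes \cite{Derd1}, while you use the eigenvalues $(\frac{1}{6}\Scal,-\frac{1}{12}\Scal,-\frac{1}{12}\Scal)$ of $W_+$ on a K\"ahler surface; this is legitimate and proves exactly the direction needed. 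For (iv) the paper cites \cite{LeB3} for the fact that those particular metrics are not Einstein, whereas you exclude Einstein metrics on $M_{2,k}$ and $M_{3,k}$ altogether via Hitchin--Thorpe; this is a stronger, purely topological argument, and it buys independence from the details of LeBrun's construction. One slip to correct, though it does not invalidate the proof: the ranges $k\geq6$ and $k\geq14$ are \emph{not} ``exactly'' where $2\chi\geq3|\sigma|$ fails. Since $\chi(M_{2,k})=k+2$, $\sigma(M_{2,k})=-k$ and $\chi(M_{3,k})=k+3$, $\sigma(M_{3,k})=1-k$, the inequality fails precisely for $k\geq5$, respectively $k\geq10$; the stated ranges are those of LeBrun's existence theorem for the scalar-flat ASD metrics, and your argument only needs that they sit inside the Hitchin--Thorpe failure ranges, which they do. Finally, where you solve the two equations $\partial_r(\lambda^2\mu^4)=0$ and $\partial_r(\mu^2/\lambda^2)=0$ directly, the paper obtains the same equivalence by setting $s=0$ in the closed-form solution \eqref{coeficientessolucao}; the content is the same, and your version arguably makes the ``if and only if'' in (i) more transparent.
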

\begin{proof}
i) It is well-known that a K\"ahler surface is scalar-flat if and only if it is anti-self-dual (\cite{Derd1}), a local result. We may thus apply Theorem \ref{torsionformsofX_pm} above to get the first part. Since we have $s=0$, it is indeed $\lambda$ and $\mu$ constant by \eqref{coeficientessolucao}, and reciprocally. Completeness follows by completeness of the totally geodesic fibres, by completeness of the base manifold and the Hopf-Rinow Theorem on local product metrics (cf. \cite{Alb18} for details and \cite{BrySal} for a similar argument, which also appears below).\\
ii) The only spin compact cases in i) are $M_0$ and the $K3$ surfaces with Calabi-Yau metric (\cite{LeB1}). Since the latter and $M_0$ are actually Einstein, all torsion tensors in Theorem \ref{torsionformsofX_pm} vanish.\\
iii)  Fibre and base of $M_1$ have opposite sectional curvature, but $M_1$ is not Einstein, so $\tau_3\neq0$.\\
iv) In \cite{LeB3} it is shown that the metrics considered are not Einstein, so $\tau_3\neq0$; again taking $\lambda,\mu$ constant solves equations $\tau_i=0$ for $i=1,2$.
\end{proof}
The classification of compact simply-connected 4-manifolds with scalar-flat ASD metric consists of the $K3$ surfaces and the two classes $M_{2,k}$ and $M_{3,k}$ --- the statement of LeBrun. Bear in mind that we have been considering classes of metrics up to orientation-preserving isometric diffeomorphism.

Determining the holonomy subgroups of $\gdois$ for the manifolds $\Lambda^2_+T^*K3$, which confirms to be $\SU(2)$, is a simple task also accomplished in \cite{Alb18}. Of course, this finding of a $\gdois$ is stated for the sake of completion. The same is true for the flat class $M_0$ in ii) of trivial holonomy.

The next result, partly stated in \cite{BrySal}, is a mirror of the Bryant-Salamon Theorem \ref{BryantSalamontheorem}, but its proof is not. First recall the complex hyperbolic space $\calH^2_\C=\SU(2,1)/\mathrm{U}(2)$, which is a ball in $\C^2$. From \cite{BCGP} we know that it is Einstein and self-dual for the canonical orientation. Let $r_0\in\R^+$ and
\begin{equation}
 D_{r_0,\pm}M=\{ x\in X_\pm:\ \tfrac{1}{2}\|x\|_{_M}^2<r_0\}\subset\Lambda^2_\pm T^*M\ .
\end{equation}
\begin{teo}
For any given $r_0>0$, the real hyperbolic space $\calH^4=SO(4,1)/SO(4)$ and the complex hyperbolic space $\calH^2_\C$, both endowed with standard metrics, are such that the disk bundle manifolds $D_{r_0,\pm}\calH^4$ and $D_{r_0,-}\calH^2_\C$ admit a non-complete metric with holonomy equal to $\gdois$.
\end{teo}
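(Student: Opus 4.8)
The plan is to feed each model into Theorem~\ref{torsionformsofX_pm} and then integrate the torsion equations exactly as in the compact Bryant--Salamon case of Theorem~\ref{BryantSalamontheorem}, the only change being the sign of the Einstein constant $s$ --- which is precisely what converts the complete metrics there into incomplete ones defined only on a disk bundle. First I would record the curvature type of each base. The real hyperbolic space $\calH^4$ has constant sectional curvature, hence is conformally flat ($W_+=W_-=0$) and Einstein with $s=\tfrac{1}{12}\Scal<0$; being simultaneously self-dual and anti-self-dual it satisfies the hypothesis of Theorem~\ref{torsionformsofX_pm} for \emph{both} $X_+$ and $X_-$, which is exactly what the double sign in $D_{r_0,\pm}\calH^4$ encodes. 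The complex hyperbolic plane $\calH^2_\C$ is Einstein and self-dual for the canonical orientation by \cite{BCGP}, again with $s<0$, so it meets the hypothesis for $X_-$ only, whence the single bundle $D_{r_0,-}\calH^2_\C$. In every case $M$ is Einstein, so the $B$-component is zero, $\rho=\mp s\,\check{\eta}$, and Theorem~\ref{torsionformsofX_pm} yields at once $\tau_0=0$ and $\tau_3=\mp\lambda^2 f\rho_{{}_B}a^t=0$.

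Next I would solve $\tau_1=\tau_2=0$. Since $M$ is Einstein the constant $s$ is a fixed negative number, and Lemma~\ref{lemadas3funcoes} lets me impose $\lambda\mu=c_0$ together with $\partial_r\mu^2-s\lambda^2=0$; the unique solution is \eqref{coeficientessolucao}, namely $\mu^2=(2c_0^2 s r+c_1)^{1/2}$ and $\lambda^2=c_0^2(2c_0^2 s r+c_1)^{-1/2}$. Because $s<0$ the radicand is positive precisely on $r<r_1:=c_1/(2c_0^2|s|)$, so I choose $c_1=2c_0^2|s|\,r_0$ to make the maximal domain of definition the disk bundle $\{r<r_0\}=D_{r_0,\pm}M$. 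There $\lambda,\mu>0$ are smooth, all four torsion forms vanish, hence $\dx\phi=\dx\psi=0$ and $\na\phi=0$, so the holonomy of $\mathrm{g}_\phi$ is contained in $\gdois$. Incompleteness is then immediate: as $r\to r_0^-$ one has $\mu\to0$ and $\lambda\to+\infty$, the rim $\{r=r_0\}$ lies outside the manifold and the metric degenerates there, while a radial geodesic in a (totally geodesic) fibre reaches the rim in finite length because the singularity of $\int\lambda\,\dx|a|$ at $r_0$ is integrable.

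The real work is to upgrade this inclusion to an equality, and here the completeness argument of Bryant and Salamon is not available. I would argue locally through Ambrose--Singer: the restricted holonomy algebra is the Lie algebra generated by the curvature endomorphisms $\{R_x(u,v)\}$ and their parallel transports, so it suffices to exhibit one fibre on which these already generate $\g_2$. The guiding observation is that $S^4$ and $\C\Proj^2$ are the compact symmetric duals of $\calH^4$ and $\calH^2_\C$, so their curvature tensors differ only by a global sign; since $\mathrm{g}_\phi$ is built functorially from the base curvature and the radial functions $\lambda,\mu$, its curvature on $X_\pm\calH^4$ and $X_-\calH^2_\C$ is governed by the same universal formulas in $s$ as that on the Bryant--Salamon spaces $X_\pm S^4$ and $X_-\C\Proj^2$ of Theorem~\ref{BryantSalamontheorem}, with $s>0$ replaced by $s<0$. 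As the condition that $\{R_x(u,v)\}$ generate $\g_2$ is an algebraic, sign-insensitive property of the curvature that already holds in the compact case, the same computation delivers $\g_2$ here; equivalently one checks directly that $\mathrm{g}_\phi$ is neither locally symmetric (the functions $\lambda,\mu$ vary genuinely with $r$, so $\na R\neq0$) nor locally reducible (a splitting would force a proper holonomy subgroup, impossible since $\gdois\subset\SO(7)$ already acts irreducibly and the curvature is nondegenerate). Together with $\na\phi=0$ this gives holonomy algebra $\g_2$, and as the disk bundle over the contractible base is simply connected the full holonomy group is the connected Lie group $\gdois$. This verification that the curvature spans all of $\g_2$ is the one genuine obstacle; the duality with the compact Bryant--Salamon models is what reduces it to their established computation.
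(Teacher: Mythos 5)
Your construction and incompleteness arguments coincide with the paper's: the Einstein condition kills $\tau_0$ and $\tau_3$ by Theorem \ref{torsionformsofX_pm}, the system $\tau_1=\tau_2=0$ is solved by \eqref{coeficientessolucao} via Lemma \ref{lemadas3funcoes}, the sign $s<0$ forces the domain to be the disk bundle with $c_1=2c_0^2|s|r_0$, and incompleteness follows from the finite length of a radial curve in a fibre (the singularity $(2r_0-t^2)^{-1/4}$ being integrable). Up to there the proposal is sound and essentially identical to the paper.

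The gap is the step you yourself call ``the one genuine obstacle'': upgrading $\mathrm{Hol}\subset\gdois$ to equality. Your reduction to the ``established computation'' in the compact case reduces to a computation that does not exist: Bryant and Salamon did not prove fullness of holonomy by checking that the curvature operators of $\mathrm{g}_\phi$ span $\g_2$; they used the global criterion that a parallel $\gdois$ structure with no nontrivial parallel 1-forms has full holonomy, and they excluded parallel 1-forms by topology (the total space is not diffeomorphic to $\R^7$) combined with representation theory of the isometry group of the base. That argument is decidedly not ``sign-insensitive'': the paper's remark following this theorem is devoted to showing precisely why it breaks in the hyperbolic cases --- the total space over the complex hyperbolic ball \emph{is} diffeomorphic to $\R^7$, and $\SO(4,1)$, unlike $\SO(5)$, does admit 4-dimensional representations, so the module of parallel 1-forms cannot be excluded by the same counting. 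Your fallback via Berger's list is circular: to rule out the proper holonomy subgroups $\SU(3)$, $\SU(2)$, $\{1\}$ of $\gdois$ you must prove local irreducibility and $\na R\neq0$, but you justify irreducibility by saying ``$\gdois\subset\SO(7)$ already acts irreducibly and the curvature is nondegenerate'' --- the first clause assumes the conclusion (it is the possibly proper holonomy subgroup, not $\gdois$, whose action must be shown irreducible), and neither the nondegeneracy of the curvature nor $\na R\neq0$ is ever established (the mere variability of $\lambda,\mu$ in $r$ does not by itself exclude a locally symmetric or reducible metric). The paper closes this step not by mirroring \cite{BrySal} but by invoking the general method of \cite{Alb18}; your proof needs either that input or a genuine substitute, e.g.\ an explicit curvature computation at one point of the disk bundle, or a careful analytic-continuation argument in the parameter $s$, which would in particular have to deal with the degenerate value $s=0$ separating the elliptic and hyperbolic regimes.
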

\begin{proof}
First, one considers of course \eqref{coeficientessolucao} and hence may assume $c_0=1$. Hence $\lambda(r)=(2sr+c_1)^{-\frac{1}{4}}$ and $\mu(r)=(2sr+c_1)^{\frac{1}{4}}$ with constant $s<0$; we recall the 3-form is $\phi=\lambda^3\beta-\lambda\mu^2\eta f^t$ and the metric is $\mathrm{g}_\phi=\lambda^2\mathrm{g}_V+\mu^2\mathrm{g}_H$ for both of the base spaces.  Since we must have $2sr+c_1>0$, we see that $c_1=-2sr_0$ and we are left to play with the disk bundles. From \cite{BCGP} we know that $\calH^2_\C$ is Einstein and self-dual for the canonical orientation. The non-completeness of the metric is seen by the length of a radius in the disk fibres. Indeed, taking $x_0\in X_\pm$ with $\sqrt{2}$ norm for the metric on $M$ and the curve $\gamma(t)=tx_0,\ t\in[0,\sqrt{r_0}[$, we have $r_{\gamma_t}=t^2$ and
\[ \int_0^{\sqrt{r_0}}\|x_0\|_\phi\,\dx t =\int_0^{\sqrt{r_0}}\lambda\,\dx t=\frac{1}{(-2s)^{\frac{1}{4}}}\int_0^{\sqrt{r_0}}\frac{\dx t}{(r_0-t^2)^{\frac{1}{4}}}\sim \int_0^{\sqrt{r_0}}\frac{\dx t}{(\sqrt{r_0}-t)^{\frac{1}{4}}}<+\infty\ .  \]
As the fibres are totally geodesic and spherically symmetric, a fibre geodesic exists but it cannot be extended indefinitely. Finally, the holonomy equal to $\gdois$ follows by a main result which is Theorem 3.1 in \cite{Alb18}.
\end{proof}
We remark the vertical \textit{radial} geodesics, for $s=-1$, written $\gamma(t)=\epsilon(t)x_0\in D_{r_0,\pm}M,\ t\in\R$, with $\|x_0\|_{_M}^2=2$, must have the following equation, cf. \cite{Alb18}:
\begin{equation}
 2\ddot{\epsilon}(r_0-\epsilon^2)+3\dot{\epsilon}^2\epsilon=0\ .
\end{equation}
We note the \textit{incompleteness} of the metric is in sharp contrast with the elliptic geometry case. The end of the above proof is accomplished with a general technique, found in \cite{Alb18}, developed with the purpose of computing holonomy on vector bundles with spherically symmetric metrics. This procedure also gives a new proof of the $s>0$ case, i.e. the case of the Bryant-Salamon manifold.

\begin{rema}
It is interesting to see why, after all, the mirror proof of the result about the holonomy group for the two base manifolds with constant $s>0$ does not work for the other two cases with constant $s<0$. To guarantee the holonomy subgroup of $\gdois$ is the whole group, \cite{BrySal} applies a general criterion which says it is sufficient that there do not exist non-trivial parallel 1-forms on the given $\gdois$ parallel manifold.
Following the article, we must first prove our manifolds $\Lambda^2_+$ are not diffeomorphic to $\R^7$. That is true for the real hyperbolic base, a pseudo-sphere, since $\pi_3(\calH^4)\neq0$. But false for the complex hyperbolic ball $\calH^2_\C$ (contrary to the $\C\Proj^2$ case). Also the proof continues with representation theory of the $G$-module $\calP$ of $\na^{\mathrm{g}_\phi}$-parallel 1-form fields, where $G$ is the isometry group of the base manifold. $\calP$ is a vector space which is, in the real case, and should be, in the complex case, of $\dim<7$.  The isometries preserve $\mathrm{g}_\phi$ by construction, hence $G$ acts on $\calP$. For our hyperbolic base spaces, $G=\SO(4,1)$ and $\mathrm{U}(2,1)$, cf. \cite{BCGP}, which of which are the respective mirrors of the elliptic $G=\SO(5)$ and $\mathrm{SU}(3)$. We also note the orthogonal to $\calP$ is not finite dimensional in $\Omega^1_{\Lambda_+^2}$ so we cannot easily argue with it. A few arguments which the reader may check, valid for all cases, tell us that the $G$ action must have irreducible components of $\dim 0,3$ or $4$. In both elliptic cases, that is impossible and further-on implies that $\calP=0$. But in the real hyperbolic case there do exist representations of $\SO(4,1)$ in dimension 4, cf. \cite{BogWhite}.
\end{rema}

\section{$\gdois$ structures on the frame bundle $P_\pm$}
\label{sec3}

Given the oriented Riemannian 4-manifold $M$ from previous sections, we consider another fibre bundle, this time compact, with 3-dimen\-si\-o\-nal fibres and canonical 2-forms. The principal $\SO(3)$-bundle $P_\pm=P_\pm M$ of oriented norm $\sqrt{2}$ orthogonal frames of $\Lambda^2_\pm T^*M$, introduced in section \ref{sec1:subtwo}, may be endowed with a family of natural $\gdois$ structures.

We continue to denote by $\eta=(e^1,e^2,e^3)$ the tautological 2-form field and by $\omega,\rho$ the, respectively, connection 1-form and curvature 2-form fields of $\sol(3)$ matrices, all three globally defined on the total space $P_\pm M$. They are related by $\dx\eta=\eta\omega$ and $\rho=\dx\omega+\omega\omega$ and hence also by $\eta\rho=0$, cf. Proposition \ref{cartan2eBianchi2erho}. One might recall these equations arise equivariantly from the frame bundle of the cotangent bundle of $M$ and its sections, which we now disregard. Indeed $\omega$ is a connection 1-form and has the same value for every coframe of $M$ which induces a given self-dual or anti-self-dual 2-forms coframe.

Using the methods introduced in (\ref{littletool}), we now define
\begin{equation}
\begin{split}
    f=(\omega^1,\omega^2,\omega^3)\qquad\qquad \hat{\rho}=(\rho^1,\rho^2,\rho^3) \\
    \beta=\omega^{123}  \ .   \hspace{33mm}
\end{split}
\end{equation}
The following identities are easy to deduce:
\begin{equation}
\begin{split}
      \frac{1}{2}f\omega=(\omega^{23},\omega^{31},\omega^{12})=(\omega\omega)^\wedge\qquad\qquad \hat{\rho}=\dx f+\frac{1}{2}f\omega\qquad \\
      \omega\hat{\rho}^t=-\rho f^t\qquad\qquad\beta=\frac{1}{6}f\omega f^t\qquad
      \qquad\omega f^tf=2\beta1_3=f^tf\omega  \\
      \eta\omega f^t=f\omega\eta^t \qquad\qquad\omega\omega f^t=0
      \qquad\qquad\qquad
\end{split}
\end{equation}
and
\begin{equation}
-f\rho f^t=f\omega\hat{\rho}^t=\hat{\rho}\omega f^t=2(\rho^1\omega^{23}+\rho^2\omega^{31}+\rho^3\omega^{12})\ .
\end{equation}
It is convenient to see further, the also purely algebraic relations:
\begin{equation}
\begin{split}
& f\rho f^t\eta f^t=-2(\rho^1\omega^{23}+\rho^2\omega^{31}+\rho^3\omega^{12})(e^1\omega^{1}+e^2\omega^{2}+e^3\omega^{3})=-2\beta\hat{\rho}\eta^t=-2\beta\eta\hat{\rho}^t\\
&\qquad\qquad\qquad\qquad\eta\omega f^t\eta f^t=f\omega\eta^t\eta f^t=\pm2\vol f\omega f^t=\pm12\beta\vol\\
&\quad\qquad\qquad\qquad \eta f^t\eta f^t=0\qquad\qquad \eta\hat{\rho}^t\eta f^t=f\eta^t\eta\hat{\rho}^t=
 \pm2\vol f\hat{\rho}^t \ .
\end{split}
\end{equation}
Finally, the announced $\gdois$ structures are given by
\begin{equation}
 \begin{cases}
  \qquad\quad\phi=\lambda^3\beta\mp\lambda\mu^2\eta f^t \\
  \psi=*_\phi\phi=\mu^4\vol-\frac{\lambda^2\mu^2}{2}\eta\omega f^t
 \end{cases}
\end{equation}
with positive scalar functions $\lambda,\mu\in\Omega^0_{P_\pm}$. Again recalling $\eta\rho=0$, let us differentiate the components and then the forms $\phi$ and $\psi$:
\begin{equation}
 \begin{split}
 \dx\beta\ =\ & \frac{1}{6}(\hat{\rho}\omega f^t-f\rho f^t+f\omega\hat{\rho}^t)\ =\ -\frac{1}{2}f\rho f^t\\
 \dx(\eta f^t)\ =\ &\eta\omega f^t-\frac{1}{2}\eta\omega f^t+\eta\hat{\rho}^t\ =\ \eta(\frac{1}{2}\omega f^t+\hat{\rho}^t)\\
 \dx(\eta\omega f^t)\ =\ &\eta(\omega\omega f^t+\rho f^t-\omega\omega f^t-\omega\hat{\rho}^t+\frac{1}{2}\omega\omega f^t)\ =\ -\eta\omega\hat{\rho}^t
 \ =\ \eta\rho f^t\ =\ 0\\
   \dx\phi\ =\ &\dx\lambda^3\,\beta-\frac{\lambda^3}{2}f\rho f^t\mp\dx(\lambda\mu^2)\,\eta f^t \mp\lambda\mu^2\eta(\frac{1}{2}\omega f^t+\hat{\rho}^t)\\
   \dx\psi\ =\ &\dx\mu^4\,\vol-\frac{1}{2}\dx(\lambda^2\mu^2)\,\eta\omega f^t\ .
 \end{split}
\end{equation}
Now we look for the torsion tensors.
\begin{prop}
Let $s=\frac{\Scal_M}{12}$ be the scalar curvature function. We then have:
\begin{equation}
 \tau_0=\pm\frac{6}{7\lambda\mu^2}(\mu^2+2s\lambda^2)\ .
\end{equation}
\end{prop}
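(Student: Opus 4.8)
The plan is to isolate $\tau_0$ with the same device used in the proof of Theorem~\ref{torsionformsofX_pm}(i), namely wedging $\dx\phi$ with $\phi$. Starting from the first line of \eqref{classificationequationsofG2structures} and using $\phi\wedge\phi=0$ (since $\phi$ has odd degree), together with the $\gdois$-equivariance that forces $(*_\phi\tau_3)\wedge\phi=0$ (the induced map $W_3\rr\Lambda^7$ being a morphism out of a nontrivial irreducible into the trivial module), only the $\tau_0$-term survives. Hence, with $\phi\wedge\psi=7\Vol_{\mathrm{g}_\phi}$ and $\Vol_{\mathrm{g}_\phi}=\lambda^3\mu^4\,\beta\vol$, I would reduce the problem to
\begin{equation*}
 (\dx\phi)\wedge\phi\ =\ \tau_0\,\psi\wedge\phi\ =\ 7\tau_0\lambda^3\mu^4\,\beta\vol\ ,
\end{equation*}
so it only remains to read off the coefficient of $\beta\vol$ in $(\dx\phi)\wedge\phi$.

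First I would discard the terms that cannot reach top degree. Grading forms by bidegree (vertical degree in $\omega^1,\omega^2,\omega^3$ and horizontal degree in $e^4,\dots,e^7$), the volume $\beta\vol$ has bidegree $(3,4)$, while the two summands of $\phi=\lambda^3\beta\mp\lambda\mu^2\eta f^t$ have bidegrees $(3,0)$ and $(1,2)$. Expanding $\dx\phi$ into its five summands, a quick count shows the $\dx\lambda^3\,\beta$ contribution always exceeds vertical degree $3$, and the $\dx(\lambda\mu^2)\,\eta f^t$ contribution dies either through $\eta f^t\eta f^t=0$ or by exceeding horizontal degree $4$; this is exactly why $\tau_0$ is independent of $\dx\lambda,\dx\mu$, as the statement demands. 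Only three products remain:
\begin{equation*}
 -\tfrac{\lambda^3}{2}f\rho f^t\wedge(\mp\lambda\mu^2\eta f^t)\ ,\qquad
 \mp\tfrac{\lambda\mu^2}{2}\eta\omega f^t\wedge(\mp\lambda\mu^2\eta f^t)\ ,\qquad
 \mp\lambda\mu^2\eta\hat\rho^t\wedge\lambda^3\beta\ .
\end{equation*}

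These I would evaluate using the purely algebraic identities already recorded, namely $f\rho f^t\eta f^t=-2\beta\eta\hat\rho^t$, $\eta\omega f^t\eta f^t=\pm12\beta\vol$, and a reordering in the third (degree $4\cdot3$ is even), which gives
\begin{equation*}
 (\dx\phi)\wedge\phi\ =\ \mp2\lambda^4\mu^2\,\beta\,\eta\hat\rho^t\ \pm\ 6\lambda^2\mu^4\,\beta\vol\ .
\end{equation*}
The last ingredient is $\eta\hat\rho^t=\sum_ie^i\wedge\rho^i$: via the Singer--Thorpe decomposition \eqref{curvaturematrix}, the orthogonality $e^i_\pm\wedge e^j_\mp=0$ and the normalisation $e^i_\pm\wedge e^j_\pm=\pm2\delta^{ij}\vol$, only the $A$-block (for $P_+$) or the $C$-block (for $P_-$) survives, and $\tr A=\tr C=\tfrac14\Scal_M=3s$ yields $\eta\hat\rho^t=-6s\,\vol$ in \emph{both} cases. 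Substituting and comparing with $7\tau_0\lambda^3\mu^4\,\beta\vol$ produces $7\tau_0\lambda^3\mu^4=\pm6\lambda^2\mu^2(\mu^2+2s\lambda^2)$, which is the asserted value of $\tau_0$.

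The main obstacle I anticipate is purely the bookkeeping of the interlocking $\pm/\mp$ signs for the two bundles $P_+$ and $P_-$; in particular, the delicate point is verifying that $\eta\hat\rho^t=-6s\vol$ emerges with the \emph{same} sign for both bundles, the sign asymmetry of $e^i_\pm\wedge e^j_\pm$ precisely cancelling that of the identifications $A=-\tilde a$ versus $C=\tilde c$. It is this cancellation that makes the final formula uniform up to the single overall $\pm$.
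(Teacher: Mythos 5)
Your proposal is correct and follows essentially the same route as the paper's own proof: the paper likewise isolates $\tau_0$ via $7\tau_0\Vol_{\mathrm{g}_\phi}=\phi\wedge\dx\phi$, reduces to the same three surviving products, applies the identities $f\rho f^t\eta f^t=-2\beta\eta\hat{\rho}^t$ and $\eta\omega f^t\eta f^t=\pm12\beta\vol$, and rests on the same key fact $\eta\hat{\rho}^t=-6s\vol$ (which the paper calls the ``remarkable equation'' and states without your explicit Singer--Thorpe sign verification, a worthwhile addition). The only quibble: your discard of the $\dx(\lambda\mu^2)\,\eta f^t\wedge\lambda^3\beta$ term should cite vertical degree exceeding $3$ (the bidegree of $\eta f^t\wedge\beta$ is $(4,2)$), not horizontal degree exceeding $4$; the term vanishes either way, so the argument stands.
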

\begin{proof}
Recalling the equations for $\rho$ in (\ref{curvaturematrix}), we note the remarkable equation $\eta\hat{\rho}^t=-6s\vol$. With the dimensions of the vertical and horizontal 1-form subspaces in mind, we find
 \begin{equation*}
 \begin{split}
7\tau_0\Vol_\phi =\ & \phi\dx\phi \\
   =\ &\mp\lambda^4\mu^2\beta\eta\hat{\rho}^t\pm\frac{\lambda^4\mu^2}{2}f\rho f^t\eta f^t
       +\frac{1}{2}\lambda^2\mu^4\eta\omega f^t\eta f^t\\
       =\ & \mp\lambda^4\mu^2\beta(\eta\hat{\rho}^t+\hat{\rho}\eta^t)\pm6\lambda^2\mu^4\beta\vol\\
       =\ & \pm6\lambda^2\mu^2(2s\lambda^2+\mu^2)\beta\vol
 \end{split}
\end{equation*}
and the result follows.
\end{proof}
Computations have shown that it is wise to fix $\mu$ and $\lambda$ as constants; otherwise they considerably weigh on the equations and do not seem to lead to any remarkable proposition. In this setting we write a theorem, whose final statement is obtained as usual from $\tau_3=*_\phi\dx\phi-\tau_0\phi$.
\begin{teo}
For any oriented Riemannian 4-manifold $M$, the spaces $P_\pm$ admit a family of $\gdois$ structures defined by the above and the canonical 3-form  $\phi=\lambda^3\beta\mp\lambda\mu^2\eta f^t$. Then we have that $\psi=*_\phi\phi=\mu^4\vol-\frac{\lambda^2\mu^2}{2}\eta\omega f^t$. For any positive constants $\lambda,\mu$, such $\gdois$ structures are always cocalibrated ($\tau_1=\tau_2=0$) and non-calibrated. Moreover
 \begin{equation}
  \tau_3=\lambda^2(*_{{}_M}\hat{\rho})f^t+\frac{1}{7}\Bigl((12s\lambda^2-\mu^2)\eta f^t \pm
  (30s\frac{\lambda^4}{\mu^2}-6\lambda^2)\beta\Bigr) \ .
 \end{equation}
\end{teo}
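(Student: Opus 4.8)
The plan is to read off the torsion forms directly from the two structure equations for $\dx\phi$ and $\dx\psi$ already computed above, specializing to $\lambda,\mu$ constant, and then to extract $\tau_3$ via the projection formula $\tau_3=*_\phi\bigl(\dx\phi+\frac{3}{4}\phi\tau_1\bigr)$ from \eqref{classificationequationsofG2structures}, exactly in the spirit of part iv) of theorem \ref{torsionformsofX_pm}. First I would establish cocalibration: with $\lambda,\mu$ constant the displayed equation for $\dx\psi$ reduces to $\dx\psi=-\frac{1}{2}\dx(\lambda^2\mu^2)\,\eta\omega f^t=0$, since $\dx(\lambda^2\mu^2)=0$, so $\dx\psi=0$ and the structure is cocalibrated. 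Non-calibration follows because, by the preceding proposition, $\tau_0=\pm\frac{6}{7\lambda\mu^2}(\mu^2+2s\lambda^2)$ together with the $\tau_3$ term are generically nonzero in $\dx\phi$; more simply, a closed $\phi$ would force $\tau_0=0$, i.e. $\mu^2+2s\lambda^2=0$, which cannot hold identically for variable $s$ on an arbitrary $M$, and even when it does the $f\rho f^t$ and $\eta\hat{\rho}^t$ curvature terms survive.

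Next I would compute $\tau_1$, needed for the $\tau_3$ formula. Since $\dx\psi=0$, the second line of \eqref{classificationequationsofG2structures} gives $\tau_1\wedge\psi+\tau_2\wedge\phi=0$; but $\tau_1$ and $\tau_2$ live in different $\gdois$-modules ($W_1$ and $W_2$), so the standard projection $\tau_1=\frac{1}{3}*_\phi\bigl((*_\phi\dx\psi)\psi\bigr)$ yields $\tau_1=0$ immediately from $\dx\psi=0$. With $\tau_1=0$ the extraction formula collapses to $\tau_3=*_\phi\dx\phi$, so everything rests on applying $*_\phi$ to the constant-coefficient reduction of $\dx\phi$, namely
\[
\dx\phi=-\frac{\lambda^3}{2}f\rho f^t\mp\lambda\mu^2\,\eta\bigl(\tfrac{1}{2}\omega f^t+\hat{\rho}^t\bigr)\ .
\]
I would split this into its $\tau_0\psi$ part and its $*_\phi\tau_3$ part using the already-noted identities $\eta\hat{\rho}^t=-6s\vol$, $f\rho f^t=-2\beta\eta\hat{\rho}^t$-type relations, and the purely algebraic contraction rules displayed before the proposition, then act by $*_\phi$ termwise. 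Here the horizontal Hodge star lifted from $M$ enters: the curvature term $f\rho f^t$ contributes, after applying $*_\phi$ and separating the self-dual Einstein piece governed by $s$, the expression $\lambda^2(*_{{}_M}\hat{\rho})f^t$, while the $\eta\omega f^t$ and $\beta$ terms combine with the $\tau_0$-subtraction to give the remaining $-\frac{1}{7}\bigl((\mu^2-12s\lambda^2)\eta f^t\mp(30s\frac{\lambda^4}{\mu^2}-6\lambda^2)\beta\bigr)$.

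The main obstacle I anticipate is the bookkeeping of the Hodge dual $*_\phi$ acting on the mixed horizontal-vertical monomials, and in particular correctly isolating the scalar-curvature coefficient $s$ from the genuinely $*_{{}_M}$-transforming Weyl-and-traceless-Ricci part of $\rho$. Unlike part iv) of theorem \ref{torsionformsofX_pm}, here I cannot assume (anti-)self-duality, so $\hat{\rho}$ carries its full curvature content and the term $(*_{{}_M}\hat{\rho})f^t$ survives as an irreducible $W_3$ contribution; the decomposition \eqref{curvaturematrix} and the relation $\eta\hat{\rho}^t=-6s\vol$ are what let me peel off the trace part cleanly. The factor $\frac{1}{7}$ and the coefficient $\frac{\lambda^4}{\mu^2}$ are the delicate points, arising precisely from subtracting the $W_0$-projection $\tau_0\psi$ (with $\tau_0$ from the proposition) before reading off $\tau_3$ as the remaining $W_3$-component; matching these constants against the stated formula is where the computation must be done with care.
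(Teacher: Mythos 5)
Your route is the paper's own: with $\lambda,\mu$ constant, cocalibration is immediate from the displayed $\dx\psi=\dx\mu^4\,\vol-\frac{1}{2}\dx(\lambda^2\mu^2)\,\eta\omega f^t=0$, this kills $\tau_1$ and $\tau_2$, and $\tau_3$ is then read off from $\dx\phi$ using the $\tau_0$ of the preceding proposition; non-calibration is likewise settled by what survives in $\dx\phi$ when $\tau_0=0$, which is how the paper deduces it in the ensuing corollary. Your non-calibration sketch is sound and can be made precise in one line: if $\dx\phi=0$ then $\tau_0=0$ forces $s=-\mu^2/2\lambda^2\neq0$, and then the purely horizontal component $\mp\lambda\mu^2\eta\hat{\rho}^t=\pm6s\lambda\mu^2\vol$ of $\dx\phi$ is nonzero, a contradiction.

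There is, however, one error you must repair before any computation, because as written your plan is self-contradictory. The formula you import from theorem \ref{torsionformsofX_pm}(iv), $\tau_3=*_\phi\bigl(\dx\phi+\frac{3}{4}\phi\wedge\tau_1\bigr)$, is not a general consequence of \eqref{classificationequationsofG2structures}: it presupposes $\tau_0=0$, which held there (proved in part (i)) but fails here, where $\tau_0=\pm\frac{6}{7\lambda\mu^2}(\mu^2+2s\lambda^2)$ is generically nonzero. Hence your claim that with $\tau_1=0$ the extraction ``collapses to $\tau_3=*_\phi\dx\phi$'' is false; since $\dx\phi=\tau_0\psi+*_\phi\tau_3$ and $*_\phi\psi=\phi$, the correct identity — and the one the paper's proof states — is $\tau_3=*_\phi\dx\phi-\tau_0\phi$. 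Computing $*_\phi\dx\phi$ alone would produce $\tau_3+\tau_0\phi$, which misses exactly the $\frac{1}{7}$-structure of the asserted formula: adding $\tau_0\phi$ to the stated $\tau_3$ turns the coefficient of $\eta f^t$ into $-\mu^2$ and that of $\beta$ into $\pm 6s\lambda^4/\mu^2$, erasing every factor $\frac{1}{7}$. Your own later prose — splitting $\dx\phi$ into its $\tau_0\psi$ part and its $*_\phi\tau_3$ part, and attributing the $\frac{1}{7}$ and $\lambda^4/\mu^2$ coefficients to the subtraction of the $W_0$-projection — is precisely this correction, so the proposal is salvageable essentially as intended; but the displayed extraction formula must be replaced by $\tau_3=*_\phi\dx\phi-\tau_0\phi$, after which the rest of your outline matches the paper's proof.
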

We remark it is quite demanding to check that $\phi\tau_3=0$ and $\psi\tau_3=0$, as the theory predicts. For the first, one is confronted with the appearance of a 6-form $f\eta^t(*_{_M}\hat{\rho})f^t$, which vanishes. Indeed, between the two $f$ we find a symmetric matrix $\eta^t\hat{\rho}$, essentially the map $A$ or $C$ from (\ref{curvaturematrix}), which one may hence diagonalise. Also checking that $\psi\tau_3=0$ asks for the deduction of an auxiliary result, in which Lemma \ref{LemmaSingerThorpe} and its proof are recalled:
\begin{equation}
\eta\omega f^t(*_{_M}\hat{\rho})f^t=\pm\hat{\rho}f^t\eta\omega f^t=\pm\hat{\rho} f^tf\omega\eta^t=\pm\hat{\rho}\eta^t2\beta=4(\mp\tr{\left\{{}^A_C\right\}})\vol\beta=\mp12s\beta\vol.
\end{equation}
Then one may proceed to verify $\psi\tau_3=0$, with deserved satisfaction.

Recall that $M$ is anti-self-dual (respectively, self-dual) and Einstein if in referring to $P_+$ (respectively, $P_-$) we have $\hat{\rho}=-s\eta$ (respectively, $\hat{\rho}=s\eta$).
\begin{coro}
The $\gdois$ structure $\phi$ is of pure type $W_3$ if and only if $M$ has constant scalar curvature and $\mu,\lambda$ satisfy $\Scal_M=-\frac{6\mu^2}{\lambda^2}$. In this case, $\tau_3\neq0$ since
\begin{equation}
 \tau_3=\lambda^2(*_{{}_M}\hat{\rho})f^t-\mu^2\eta f^t\mp3\lambda^2\beta \ .
\end{equation}
If moreover $M$ is also ASD (SD) and Einstein, then $\tau_3=\pm\frac{1}{2\lambda}(\phi-7\lambda^3\beta)$.
\end{coro}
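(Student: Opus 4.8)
The plan is to prove the Corollary by specializing the general torsion formula for $\tau_3$ given in the preceding Theorem. The statement has two parts: first, the characterization of pure type $W_3$ together with the displayed simplified form of $\tau_3$; second, the further reduction under the ASD (resp.\ SD) and Einstein hypothesis. I would handle these in order, since the second builds directly on the first.

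First I would recall that a structure is of pure type $W_3$ exactly when $\tau_0=\tau_1=\tau_2=0$ and $\tau_3\neq0$. Since we work with constant $\lambda,\mu$, the Theorem already guarantees the structure is cocalibrated, i.e.\ $\dx\psi=0$, which forces $\tau_1=\tau_2=0$ (these are read off the $\dx\psi$ equation in \eqref{classificationequationsofG2structures}, and with constant coefficients $\dx\psi=0$). So the pure $W_3$ condition reduces to the single equation $\tau_0=0$. By the Proposition computing $\tau_0=\pm\frac{6}{7\lambda\mu^2}(\mu^2+2s\lambda^2)$, vanishing is equivalent to $\mu^2+2s\lambda^2=0$, that is $s=-\frac{\mu^2}{2\lambda^2}$; recalling $s=\frac{\Scal_M}{12}$ this reads $\Scal_M=-\frac{6\mu^2}{\lambda^2}$. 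One must also note this requires $\Scal_M$ constant, which is consistent with the hypothesis. I would then substitute $\mu^2=-2s\lambda^2$ into the general $\tau_3$ from the Theorem: the bracketed term $(\mu^2-12s\lambda^2)\eta f^t$ becomes $(-2s\lambda^2-12s\lambda^2)\eta f^t=-14s\lambda^2\,\eta f^t$, and the $\beta$-coefficient $30s\frac{\lambda^4}{\mu^2}-6\lambda^2=30s\frac{\lambda^4}{-2s\lambda^2}-6\lambda^2=-15\lambda^2-6\lambda^2=-21\lambda^2$. Dividing by $7$ yields $-2s\lambda^2\,\eta f^t=\mu^2\eta f^t$ and $\mp(-3\lambda^2)\beta=\pm3\lambda^2\beta$ with the appropriate sign tracking, so that $\tau_3=\lambda^2(*_{{}_M}\hat\rho)f^t-\mu^2\eta f^t\mp3\lambda^2\beta$, as claimed. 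That $\tau_3\neq0$ follows because the summands live in distinct form-types and $\mu\neq0$.

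For the second statement, I would impose the ASD/Einstein (resp.\ SD/Einstein) condition, which per the remark immediately preceding the Corollary means $\hat\rho=\mp s\eta$. The key input is how $*_{{}_M}$ acts on $\eta$: since $\eta=(e^1,e^2,e^3)$ is a triple of self-dual (resp.\ anti-self-dual) 2-forms lifted to the horizontal subspace, $*_{{}_M}\eta=\pm\eta$, the sign being $+$ for self-dual and $-$ for anti-self-dual. Substituting $\hat\rho=\mp s\eta$ gives $*_{{}_M}\hat\rho=\mp s\,(*_{{}_M}\eta)=\mp s(\pm\eta)$, and then $\lambda^2(*_{{}_M}\hat\rho)f^t$ collapses into a multiple of $\eta f^t$. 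Combining with the pure $W_3$ relation $\mu^2=-2s\lambda^2$ (equivalently $s=-\frac{\mu^2}{2\lambda^2}$) should consolidate the two $\eta f^t$ terms into a single multiple, and the surviving expression should reorganize into $\tau_3=\pm\frac{1}{2\lambda}(\phi-7\lambda^3\beta)$ once one recalls $\phi=\lambda^3\beta\mp\lambda\mu^2\eta f^t$, so that $\phi-7\lambda^3\beta=-6\lambda^3\beta\mp\lambda\mu^2\eta f^t$.

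The main obstacle I anticipate is pinning down the signs consistently: there are three interlocking sign conventions — the $\mp$/$\pm$ from the self-dual versus anti-self-dual case, the eigenvalue sign of $*_{{}_M}$ on $\eta$, and the sign in $\hat\rho=\mp s\eta$. Getting $*_{{}_M}\hat\rho$ right requires carefully composing these, and a single misstep will spoil the final clean identity. I would therefore verify the scalar coefficients by treating the $\beta$-component and the $\eta f^t$-component separately, checking that the $\beta$-coefficient matches $\mp3\lambda^2$ (equivalently the $-6\lambda^3$ inside the parenthesis after factoring $\pm\frac{1}{2\lambda}$) and that the $\eta f^t$-coefficients from $\lambda^2(*_{{}_M}\hat\rho)f^t$ and from $-\mu^2\eta f^t$ add up to $\mp\frac{\mu^2}{2}$, exactly reproducing the middle term of $\pm\frac{1}{2\lambda}\phi$. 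The rest is routine substitution.
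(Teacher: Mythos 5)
Your proposal is correct and follows essentially the same route the paper intends: read pure type $W_3$ off as $\tau_0=0$ (cocalibration from the Theorem killing $\tau_1,\tau_2$), solve $\mu^2+2s\lambda^2=0$ via the Proposition, substitute into the Theorem's $\tau_3$ formula, and then use $\hat\rho=\mp s\eta$ together with $*_{{}_M}\eta=\pm\eta$ so that $*_{{}_M}\hat\rho=-s\eta$ in both cases. Two cosmetic slips that do not affect validity: the non-vanishing of $\tau_3$ rests on the $\beta$-coefficient $\mp3\lambda^2$ (so on $\lambda\neq0$, not $\mu\neq0$, since the two $(2,1)$-type terms could in principle cancel each other), and the consolidated $\eta f^t$-coefficient in the final identity is $-\frac{\mu^2}{2}$ in both the $P_+$ and $P_-$ cases (your own computation $\mp s(\pm\eta)=-s\eta$ gives exactly this), not $\mp\frac{\mu^2}{2}$.
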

Now the vanishing of $\tau_3$ implies those curvature restrictions on duality and the Ricci tensor. The reader may deduce the following corollary.
\begin{coro}[cf. \cite{FriKaMoSe}]
The structures $(P_-,\phi)$ for $M=S^4$ or $\C\Proj^2$, such that $s=\frac{\mu^2}{5\lambda^2}$, are nearly parallel. Moreover, $\dx\phi=-\frac{6}{5\lambda}\psi$.
\end{coro}
It is not clear\footnote{Comparing with twistor space, it is not even clear $\SO(5)$ acts transitively on $P_-S^4$.} to the author which nearly parallel structures from the classification in \cite[Tables 1,2,3]{FriKaMoSe} are newly represented by $P_-S^4$ and $P_-\C\Proj^2$.

Clearly the two spaces admit $\gdois$ structures such that $\|\dx\phi\|_\phi$ may be made arbitrarily small or arbitrarily large, but this is a general feature of nearly parallel structures.
Also, again the last result shows a {symmetry breaking} between $P_+$ and $P_-$, i.e. between positive and negative scalar curvature.

The principal $\SO(3)$-bundle connection 1-form $\omega$ is globally defined, so we could well define a $\gdois$ structure with $f$ given by any other permutation of $\omega^1,\omega^2,\omega^3$. How ever this may be done it does not lead to any remarkable results, since then the basic equations have proved to become quite twisted.

We have proved above that cocalibrated $\gdois$ structures are quite abundant, in coherence with \cite[Theorem 1.8]{CrowleyNordstrom}. Regarding 4-dimensional geometry, they appear naturally as, for instance, the celebrated \textit{symplectic} cotangent bundle of every given manifold. Hence there is true motivation for exploring $\gdois$ with a new natural Hamiltonian theory for 4-manifolds.

\vspace{7mm}

\bibliographystyle{plain}
\bibliography{AlbuquerquesBibliography.bib}

\vspace{6mm} 

\textsc{R. Albuquerque}
\ \ \textbar\ \ 
{\texttt{rpa@uevora.pt}}

\ 

\noindent
Centro de Investiga\c c\~ao em Mate\-m\'a\-ti\-ca e Aplica\c c\~oes

\noindent
Departamento de Matem\'atica da Universidade de \'Evora\\ 
Rua Rom\~ao Ramalho, 59, 7000, \'Evora, Portugal

\vspace{5mm}

\

The research leading to these results has received funding from the People Programme (Marie Curie Actions) of the European Union's Seventh Framework Programme (FP7/2007-2013) under REA grant agreement n\textordmasculine~PIEF-GA-2012-332209.

\end{document}